\numberwithin{equation}{section}
\renewcommand\d{\partial}
\renewcommand\a{\alpha}
\def\eps{\varepsilon }
\renewcommand\d{\partial}
\renewcommand\a{\alpha}
\newcommand\R{\mathbb R}
\newcommand\C{\mathbb C}
\def\eps{\varepsilon}
\newcommand\kernel{\hbox{\rm Ker}}
\newcommand\br{\begin{remark}}
\newcommand\er{\end{remark}}
\newcommand\bp{\begin{pmatrix}}
\newcommand\ep{\end{pmatrix}}
\newcommand{\be}{\begin{equation}}
\newcommand{\ee}{\end{equation}}
\newcommand\ba{\begin{equation}\begin{aligned}}
\newcommand\ea{\end{aligned}\end{equation}}
\newcommand{\bap}{\begin{app}}
\newcommand{\eap}{\end{app}}
\newcommand{\begs}{\begin{exams}}
\newcommand{\eegs}{\end{exams}}
\newcommand{\beg}{\begin{example}}
\newcommand{\eeg}{\end{exaplem}}
\newcommand{\bpr}{\begin{proposition}}
\newcommand{\epr}{\end{proposition}}
\newcommand{\bt}{\begin{theorem}}
\newcommand{\et}{\end{theorem}}
\newcommand{\bc}{\begin{corollary}}
\newcommand{\ec}{\end{corollary}}
\newcommand{\bl}{\begin{lemma}}
\newcommand{\el}{\end{lemma}}
\newcommand{\bd}{\begin{definition}}
\newcommand{\ed}{\end{definition}}
\newcommand{\brs}{\begin{remarks}}
\newcommand{\ers}{\end{remarks}}
\newcommand{\D }{\mathcal{D}}
\newcommand{\ZZ}{{\mathbb Z}}
\newcommand{\Id}{{\rm Id }}
\newcommand{\Range}{{\rm Range }}
\newcommand{\diag}{{\rm diag }}
\newcommand{\blockdiag}{{\rm blockdiag }}
\newcommand{\Span}{{\rm Span }}
\newcommand{\sgn}{\text{\rm sgn}}
\newtheorem{theorem}{Theorem}[section]
\newtheorem{proposition}[theorem]{Proposition}
\newtheorem{corollary}[theorem]{Corollary}
\newtheorem{lemma}[theorem]{Lemma}
\theoremstyle{remark}
\newtheorem{remark}[theorem]{Remark}
\theoremstyle{remarks}
\newtheorem{remarks}[theorem]{Remarks}
\theoremstyle{definition}
\newtheorem{definition}[theorem]{Definition}
\newtheorem{example}[theorem]{Example}
\newcommand\cD{{\mathcal  D}}
\newcommand\cH{{\mathcal  H}}
\newcommand\cS{{\mathcal S}}
\newcommand{\beq}{\begin{equation}}
\newcommand{\eeq}{\end{equation}}
\font\tenronde=rsfs10
\font\sevenronde=rsfs7
\font\fiveronde=rsfs5
\newcommand{\re}{\mathrm{Re\,}}
\newcommand{\range}{\mathrm{range}}
\newtheorem{theo}{Theorem }[section]
\newtheorem{exams}[theo]{Examples}
\numberwithin{equation}{section}
\title{Pseudodifferential damping estimates and stability of relaxation shocks}
\author{Kevin Zumbrun}
\address{Indiana University, Bloomington, IN 47405}
\email{kzumbrun@indiana.edu}
\thanks{Research of K.Z. was partially supported
under NSF grant no. DMS-0300487}
\begin{document}

\begin{abstract}
A bottleneck in the theory of large-amplitude and multi-D viscous and relaxation 
shock stability is the development of nonlinear damping estimates controlling 
higher by lower derivatives.  These have traditionally proceeded from 
time-evolution bounds based on Friedrichs symmetric and
Kawashima or Goodman type energy estimates.  Here, we propose an alternative 
program based on frequency-dependent pseudodifferential time-space damping 
estimates in the spirit of Kreiss.  These are seen to be equivalent in the 
linear case to high-frequency spectral stability, and, 
just as for the constant-coefficient analysis of Kreiss, sharp in a pointwise, 
fixed-frequency, sense.  This point of view leads to a number of simplifications 
and extensions using both already-existing analysis and new results developed here.
In particular, we essentially resolve fully existence of damping estimates for 
smooth, noncharacteristic relaxation waves in the small-amplitude case for multi-D 
and in the large-amplitude case for 1-D, showing that both reduce to satisfaction 
of a standard linear-algebraic dissipativity condition at the endstates of the 
shock, related to high-frequency stability of constant solutions.
For large-amplitude multi-D shocks, we point out a key new difficulty 
in the form of Airy-type 
\emph{turning points} analogous to Kreiss' glancing points in the 
constant-coefficient case, and introduce a variable-coefficient 
adaptation of Kreiss' symmetrizers allowing for the first time the treatment of
large-amplitude 
	%smooth or discontinuous 
	noncharacteristic relaxation shocks in multi-D.
\end{abstract}

\date{\today}
\maketitle

%\date{}

\tableofcontents

%%%%%%%%%%%%%%%%%%%%

  \section{Introduction}\label{s:intro}
  In this paper, we propose a new ``pseudodifferential type'' damping estimate for the stability analysis of
  1- and multi-D stability of planar relaxation waves, i.e., 
traveling-wave solutions $\bar w(x_1)$ of {\it relaxation systems} \cite{Li,Bre,Da}
\be\label{blaw}
\d_t f_0(w) + \sum_{j=1}^d \d_{x_j} f_j(w)=r(w),
\ee
a type of {\it hyperbolic balance law} arising in non-equilibrium mechanics, in 
which the force term $r$ vanishes on an attracting equilibrium set $\{r(w)=0\}$. 
Our studies are particularly motivated by shallow-water flow, for which the understanding of {\it hydraulic
shocks} or ``bores'' and anomalous large-amplitude discontinuous {\it roll wave} solutions are important in 
design of coastal, dam, and canal structures \cite{YZ,YZ2,JNRYZ,RZ,RZ2}.
However, the issues, and methods proposed, are common to general systems \eqref{blaw}; indeed, they 
appear useful also for stability of planar viscous shock fronts, or traveling-wave solutions of second-order
parabolic and mixed hyperbolic-parabolic systems.

A damping estimate, as introduced in \cite{Z1,Z2,MaZ1}, is an energy estimate
\be\label{damp}
\d_t \mathcal{E}(v(t))\leq -\eta \mathcal{E}(v(t)) + C\|v(t)\|_{L^2_\alpha}^2,
\qquad \eta>0,
\ee
for an evolution system in $v$, 
where $L^2_\alpha$ denotes a weighted $L^2$ space and $\mathcal{E}(s)\sim \|w\|_{ H^s_\alpha}^2$
is equivalent to a higher-derivative Sobolev space with the same weight.
In its simplest form $s=2$, $\alpha \equiv 1$, this reduces to the classic ``Kawashima estimate'' \cite{Ka,KaS,Ze} used to study stability of constant or near-constant solutions.  The use of nontrivial weights was introduced in
\cite{MaZ4,Z1} to treat stability of large-amplitude, asymptotically constant waves
and profoundly generalized in \cite{MaZ2,RZ,YZ}.

Such an estimate prohibits singularity formation, allowing analysis in standard Sobolev spaces.
Indeed, it encodes exponential slaving of the $H^s_\alpha$ norm to $L^2_\alpha$, effectively controlling higher
derivative norms by lower ones.
This is extraordinarily helpful in situations of maximal or delicate regularity, controlling apparent derivative
loss from linear estimates to close a nonlinear iteration \cite{Z1,Z2}.
More \cite{Z2}, such estimates typically 
yield also sharp high-frequency resolvent bounds, thus reducing the problems of linearized estimates and
nonlinear stability to obtaining sharp low-frequency resolvent estimates and converting them to sharp estimates
on the temporal solution operator.
Some extreme examples are the damping estimates used in \cite{MaZ2,YZ} to treat 1-D stability of large-amplitude
relaxation fronts, including fronts with ``subshocks'', or discontinuities in the shock profile,
and the 1-D estimates derived in \cite{RZ2} for discontinuous periodic roll wave solutions of the Saint
Venant equations for inclined shallow-water flow.

Our starting point is the theme introduced in \cite{Z2}, and generalized greatly in \cite{RZ,RZ2}, that 
{\it nonlinear damping estimates} for a nonlinear perturbation $v$ of a steady solution $\bar w$ 
of \eqref{blaw} are linked to {\it high-frequency resolvent estimates}
\be\label{hfres}
|(\lambda- L)^{-1}|_{\hat H^s}\leq \frac{C}{\Re \lambda -\gamma_*}, 
\; \hbox{$\gamma_*<0$, for all $|\xi, \tau|$ sufficiently large}
\ee
on the linearized operator $L$ about the wave $\bar w$, such as have been central in the linear 
estimation of behavior for viscous shock and relaxation waves \cite{MaZ1,MaZ2,MaZ4,Z2,Z3}.
In particular, when \eqref{damp} holds via quadratic energy estimate, 
we can establish a bound \eqref{hfres} using a linearized version of 
the same estimates, a sort of ``restricted Lumer--Phillips theorem'' \cite{Z2}.
And, when \eqref{hfres} holds, by any type of estimate, we can often recover a nonlinear version \eqref{damp}
obtained by quadratic energy estimate \cite{RZ,RZ2}.
In some cases, e.g., \cite{Z1,YZ}, these properties are imposed ``statically'' by structure of the {\it equations};
in others, as in \cite{RZ2}, they are imposed ``dynamically'' by structure of the {\it solution}.
However, in general, condition \eqref{damp} appears to be stronger than condition \eqref{hfres}
Thus, there is a gap between the estimates that we essentially require for the linear theory, and
those that would allow us to complete a nonlinear iteration.

The goal of this work is to close this gap by identifying a more general type of nonlinear damping estimate
that is truly equivalent to high-frequency linear damping, but still suitable to carry out a standard nonlinear
iteration such has been developed in \cite{MaZ1,Z2,Z3}.
Specifically, for maximum flexibility, keeping in mind the example of the finite-time
Kreiss-Majda analysis of conservative shock waves \cite{K,MZ3,MZ4} and the fact that at least one
of our envisioned applications involves global-in-time analysis of relaxation fronts involving subshock
discontinuities,
we seek {\it pseudodifferential}, or frequency-dependent damping estimates.
These can then be verified by Kreiss symmetrizer estimates, or, alternatively, by establishing
uniform smooth exponential dichotomies with respect to frequencies: for example, 
by classical WKB expansion as typically carried out in high-frequency analysis of the
associated linear stability problem \cite{JNRYZ,RZ,RZ2,YZ,YZ2}.

\subsection{Main results}\label{s:results}
The development of suitable damping estimates is a current bottleneck in the stability theory for relaxation
shocks, with a number of situations in which optimal linear bounds have already been established, with 
nonlinear stability waiting only for a damping estimate and associated control of regularity: in particular
large-amplitude multi-D stability of relaxation shocks \cite{YZ2}, and small-amplitude multi-D
stability without the symmetrizability assumptions of \cite{Kw,KwZ}.

Here, we address this bottleneck with the addition of new tools,
for simplicity restricting mainly
%restricting for simplicity 
to the case of {\it smooth waves}, for which we 
need consider only interior structure without the added complication of 
boundary/transmission conditions.
We first propose in place of \eqref{damp} a pseudodifferential analog \eqref{pdamp} 
that can be shown to be linearly equivalent to \eqref{hfres}.
{\bf See Proposition \ref{lineqprop}.}
Next, we show that existence of a multi-dimensional 
Kreiss symmetrizer implies a time-integrated version of the classical nonlinear 
damping estimate, and discuss relations to exponential dichotomies and 
WKB expansion. 
{\bf See Propositions \ref{nlprop} and \ref{lyapprop}.}

In the small-amplitude case, under mild additional assumptions
of hyperbolicity and constant multiplicity (conditions \eqref{A2}-\eqref{A3} below),
we show that nonlinear pseudodifferential damping is equivalent to
the restriction to high-frequencies of the linear dissipativity 
condition of Kawashima \cite{Ka,KaS} (\eqref{chf} below) needed for 
diffusive stability of nearby constant solutions, 
%applied
at a sufficiently nearby reference state, e.g., at either endstate of the shock.
{\bf See Theorem \ref{constequivcor}}.
In the large-amplitude 1-D case, by the use of exponential dichotomies,
we show under the same hypothesese that
pseudodifferential damping is equivalent to the 1-D version of
the same high-frequency linear dissipativity condition, but imposed at 
{\it both endstates} of the shock.
{\bf See Theorem \ref{1dwkb}.}

In the large-amplitude multi-D case, we point out the new issue of Airy-type
turning points, and an associated distinction between exponential dichotomies
and symmetrizers, namely, that existence of dichotomies is strictly stronger
than existence of symmetrizers, so that symmetrizer construction is the tool
of more general application.
{\bf See Propositions \ref{nodice} and \ref{yessymm}.}
Combining exponential dichotomies with Kreiss-type constant-coefficient
symmetrizers, we carry out a symmetrizer construction in a generic case
involving single Airy-type turning points. {\bf See Theorem \ref{basicthm}.}
{\it This makes possible for the first time the treatment of nonlinear stability
of large-amplitude multi-D relaxation shocks.}
In particular, our results apply to arbitrary-amplitude 
smooth hydraulic shock waves of the Saint Venant equations,
as studied in \cite{YZ2}.
We show in Section \ref{s:nests} that the time-integrated nonlinear damping estimate resulting from our pseudodifferential damping condition
serves equally well as the classical one to close a nonlinear iteration and 
yield a nonlinear stability result.  {\bf See Theorem \ref{smooththm}.} 

Along the way, we discuss in Sections \ref{lineqdisc}, \ref{s:discdich}, 
\ref{s:disc1D}, \ref{s:gendisc}, and \ref{s:absorb} 
the changes needed for extension to the discontinuous case,
in particular sketching the treatment in 1- and multi-D 
of arbitrary-amplitude shock profiles with a single subshock discontinuity, 
of the types arising for discontinuous hydraulic shock waves of
the Saint Venant equations \cite{YZ,FRYZ,YZ2}.
{\bf See Proposition \ref{lyappropdisc},
Corollary \ref{1dwkbdisc},} and the treatment in {\bf Theorem \ref{basicthmdisc}}
of a multi-d turning point case scenario noted by Erpenbeck \cite{Er4,LWZ,Z9} in the case of detonations that as shown in \cite{YZ2} is relevant
for discontinuous hydraulic shock profiles of
the Saint Venant equations of inclined shallow-water flow.
%{\bf See Theorem \ref{basicthmdisc}}.

Finally, we discuss applications and larger perspectives in 
Sections \ref{s:appl}-\ref{s:disc}, in particular pointing out as
an important open problem the complete treatment of nonlinear stability
of large-amplitude discontinuous profiles in multi-D,
building on the nonlinear damping estimates established here.

\medskip

{\bf Acknowledgement:} Thanks to M. Williams and D. Lannes for helpful discussions
regarding quantitative pseudodifferential estimates, and to M. Williams for reading an early version
of this manuscript and making several helpful suggestions and comments. 
Our ideas on this subject cannot be separated from those of L.-M. Rodrigues, with whom we have
had continuing discussion and collaboration on these and related issues for over 15 years, and indeed
we claim no novelty other than the placing of the various ideas in a coherent and general context that
may be convenient for a general audience.
We note the appearance of related observations in the treatment of stability of constant solutions
by M. Sroczinsky in \cite{Sr}, and in the general treatment of stability of discontinuous
solutions by G. Faye and L.-M. Rodrigues in \cite{FR1,FR2}.

\medskip
{\bf Dedication:} {\it to my mother, Ann Zumbrun, for her company and example
during this project.}

\section{Pseudodifferential damping and high-frequency stability}\label{s:pseudo}
Let $\bar w$ be a planar traveling wave of \eqref{blaw} taken without loss of generality
(by change of coordinates) to be steady and depending only on $x_1$, i.e., a solution
$$
w(x,t)= \bar w(x_1).
$$
Consider following the approach of \cite{MaZ1,Z3} a nearby wave 
$w(x,t)$ and perturbation $v$ defined by
%TODO: NO! nearby wave w. perturtabion defined by this:
$$
w(x_1+\psi(x_2,\dots,x_d,x_2,t), x_2, \dots, x_d, ,t)= \bar w(x) +v(x,t),
$$
where $v$ is a nonlinear residual and $\psi$
is a possible small phase modulation depending on $v$ and $\bar w$.

Then, the nonlinear perturbation equations governing $v$ take the form 
\be\label{nonlin}
 v_t + \sum_{j=1}^d A_j \partial_{x_j} v   + E v=  f,
 \ee
 %where $A_j=(df_j/dw)(\bar w +v)$, $Ev=-(dr/dw)(\bar w)v + (d^2 f_1/dw^2)(v,\partial_{x_1}\bar w)$, and 
 %TODO, CHANGED:
 where 
 $$
 A_1=(df_1/dw)(\bar w +v)+ \partial_{t} \psi \, \Id +
 \sum_{j=2}^d \partial_{x_j}\psi \,(df_j/dw)(\bar w+v), 
 $$
 $A_j=(df_j/dw)(\bar w +v)$ for $j=2,\dots,d$,
 $Ev=-(dr/dw)(\bar w)v + (d^2 f_1/dw^2)(v,\partial_{x_1}\bar w)$, and 
 $f$ is a quadratic order remainder term in $v$ and derivatives of $\psi$ multiplied by factor $\partial_{x_1}\bar w$, depending also on $\bar w$ and its derivatives, 
 %TODO, CHANGED- IMPORTANT!! (was completely unclear!)
 %but not derivatives of $v$ and $\psi$.
 but not on $\psi$ or derivatives of $v$.

The incorporation of a phase-modulation $\psi$ is necessary in 1-D or 
discontinuous cases, but typically not needed for smooth waves in multi-D \cite{Z3}.
For a standard multi-D analysis of smooth waves-- our main object here--
%main object!
the reader may consider for simplicity $\psi(x,t)\equiv 0$.
 %NOTE: what about term \cdot \psi v_x??? Where does that one go?
 %should be incorporated in coefficient A_1 it seems, as small
 %multiple of identity, harmless... but not mentioned in the above... OK, DONE.
 %
 %WHAT about discont case? NO, BUT THIS IS FINE NOW! it is only interior bound
 %that matters it seems.  
 %HOWEVER, another TODO for discont. what about source terms in the
 %boundary condition???  Well, easier, I guess, than linear analysis, since
 %don't need good unknown. It is just actual RH, so only \psi_t appears to
 %begin with: \psi_t [w]=[f_1(w)] + \sum \psi_{x_j}[f_j(w)]=0, so... what?
 %Do we even need to linearize? interesting... what do we do here? What
 %did Majda/Metivier do? this isn't quite explained, even in multi-d case!

 \subsection{Classical damping as a priori bound}\label{s:classical}
 Consider now the partially linearized equation \eqref{nonlin} with general righthand side $f$.
 The classical damping estimate of \cite{MaZ1,Z2,Z3}, broken down to its essential components, is an 
{\it a priori estimate}
 \be\label{essdamp}
\d_t \mathcal{E}(v(t))\leq -\eta \mathcal{E}(v(t)) + C(\|v(t)\|_{L^2_\alpha}^2 +\|f(t)\|_{H^s_\alpha}^2),
\qquad \eta>0,
 \ee
 for \eqref{nonlin}, involving both $v$ and the unknown $f$, where $\mathcal{E}(v)\sim \|v\|_{H^s_\alpha}^2$.

 When $f$ is a quadratic order remainder as described just above, its $v$-component may be absorbed in the lefthand
 side of \eqref{essdamp}, leaving in the case that $\psi\equiv 0$ the estimate \eqref{damp} of the
 introduction, and in the general case
 $ \d_t \mathcal{E}(v(t))\leq -\eta \mathcal{E}(v(t)) 
+ C(\|v(t)\|_{L^2_\alpha}^2 +\|\partial_{x,t}\psi(t)\partial_{x_1}\bar w\|_{H^s_\alpha}^2):$
in the 1-D case, with $\psi=\psi(t)$ independent of $x$, simply
$\d_t \mathcal{E}(v(t))\leq -\eta \mathcal{E}(v(t)) + C(\|v(t)\|_{L^2_\alpha}^2 +|\dot \psi(t)|^2)$
as in \cite{MaZ1,Z3}.

 \subsubsection{Time-integrated version}\label{s:timeint}
 From \eqref{essdamp}, one obtains by Gronwall inequality the integral version
 \be\label{idamp}
 \|v(T)\|_{H^s_\alpha}^2\leq Ce^{-\eta T} \|v(0)\|_{H^s_\alpha}^2
 + C\int_0^T e^{-\eta (T-t)}\big( \|v(t)\|_{L^2_\alpha}^2 + \|f(t)\|_{H^s_\alpha}^2 \big)\, dt
 \ee
 used in the nonlinear iteration arguments of \cite{Z2,Z3} after absorbing $f$ terms 
 as described just above- this can be done
 either before time-integration as above, or, more generally, in \eqref{idamp} itself.

 Bounds \eqref{essdamp}
 %-\eqref{idamp} 
 are obtained by weighted energy estimates,
 with $\mathcal{E}$ a weighted inner product, under additional symmetry and structure assumptions on coefficients
 $A_j$ and $E$, following the standard Friedrichs symmetric hyperbolic/Kawashima-Shizuto compensator approach, together with ``Goodman-type'' convective estimates taking advantage of noncharacteristicity \cite{MaZ1,MaZ2,Z2,Z3}.

 \subsection{Paralinearization}\label{s:paralin}
 To remove extraneous symmetry assumptions and connect directly to the linearized resolvent equation,
 we perform a pseudodifferential analysis, replacing $\partial_{x_j}$, 
 $j=2, \dots, d$ and $\partial_t$ by their symbols $i\eta_j$ and $i\tau$.
 See \cite[Appendix B]{MZ4} for a description of the relevant tools, phrased in the
 paradifferential calculus of Bony; different from the setting of \cite{MZ4}, there is no additional parameter
 here such as viscosity or relaxation constant, simplifying slightly the situation.
 This has the added advantage, in the case of planar traveling-wave profiles containing discontinuous subshocks, 
of putting things in a common framework with the finite-time stability analysis of discontinuous shock 
problems pioneered in \cite{K,M1,M2,Met3}, where frequency-dependent (in time)
estimates are seen to be necessary for multi-D stability estimates.

This yields, after a standard change of coordinate $v\to e^{\gamma t}v$ imposing an exponentially time-weighted norm,
a version of the nonlinear equation
\be\label{pres}
 \lambda v + A_1 \partial_{x_1} v + \sum_{j=2}^d i\xi_j A_j v + Ev= f
\ee
similar in form to the linearized resolvent equation after Fourier transform in $x_2, \dots, x_d)$,
with $\tau ,$ $\xi_j\in \R$  temporal and spatial frequencies, and $\lambda=\gamma +i\tau$.
%Fourier tranforms in $x_2, \dots, x_d, t$ of $v$ and $f$.
In the case of a wave with discontinuous subshock, this becomes, rather
\be\label{dpres}
 \lambda v + A_1 \partial_{x_1} v + \sum_{j=2}^d i\xi_j A_j v + Ev= f
\, \hbox{\rm for $x_1>0$, and  $\Gamma v|_{x_1=0}=g$,}
\ee
where $\Gamma=\Gamma(w, \eta,\lambda)$ is a frequency-dependent boundary condition obtained from the linearized Rankine-Hugoniot jump conditions about $w$, with estimates for the subshock front perturbation $\psi$ recoverable from $v$.
For further details, see, e.g., \cite{Met3,YZ}, or \cite[Appendix]{BMZ}.

%(TODO: put in full discontinuous bd, with $g$ somewhere and point to it from here?) NO I think.

\subsection{Pseudodifferential damping}\label{s:dampest}
In the short-time theory of \cite{K,M1,M2,Met3}, 
the goal is, by frequency-dependent type symmetric energy estimates (see {\it Method of symmetrizers} below)
to obtain uniform ``semigroup type'' estimates 
%TODO: Mark Q: is def of H^s correct? double check!!! in {semigpest}
\be\label{semigpest}
\|v\|_{\hat H^s}\leq \frac{C\|f\|_{\hat H^s}}{\Re \lambda -\gamma_*}, 
\; \hbox{\rm where $\|f\|_{\hat H^s}:= \|f\|_{H^s}+ (1+|\xi|)^s\|f\|_{L^2}$,}  
\ee
for \eqref{pres}, and analogous estimates \eqref{semigpestdisc}
for \eqref{dpres} involving also $g$.
Parseval's identity gives then 
\be\label{bpars}
\int_0^\infty e^{2 \gamma t}\|v(t)\|_{H^s}^2 dt 
\leq C_2 \int_0^\infty e^{2 \gamma t}\|f(t)\|_{H^s}^2 dt ,
\ee
for any $\gamma >\gamma_*$ and
$v$, $f$ vanishing for $t\leq 0$ and satisfying \eqref{nonlin}, and thus 
for $v$, $f$ vanishing also on $t\geq T$, 
\be\label{bparcor}
\int_0^T e^{2 \gamma (T- t}\|v(t)\|_{H^s}^2 dt
\leq C_2 \int_0^T e^{2 \gamma (T-t)}\|f(t)\|_{H^s}^2 dt,
\ee
similarly as would be obtained by Duhamel's principle via time-exponential $C_0$ semigroup estimates.

However, the tools used to obtain this of integration by parts/G\"arding inequality
and Parseval's identity are sufficiently robust to be reproduced in the {\it nonlinear setting} via 
corresponding pseudodifferential calculations.
For $v$, $f$ compactly supported in time, 
we need only establish this for a single $\gamma>\gamma_*$, and not all, analogous to but more flexible
than Pr\"uss' theorem \cite{Pr} for semigroups.

For this bounded-time theory, $\gamma_*$ may be taken positive, and as large as desired, allowing exponential
growth at bounded rate.  For our different purposes here, we require $\gamma_*$ stricty negative at high
frequencies, making the bound \eqref{semigpest} impossible at low frequencies for the situations we consider,
in which at best time-algebraic stability of perturbations is expected to occur.
Thus, analogous to \eqref{damp}, we seek a 
{\it pseudodifferential damping condition} for problem \eqref{pres} of
\be\label{pdamp}
\|v\|_{\hat H^s}\leq \frac{C(\|f\|_{\hat H^s}+ \|v\|_{L^2})}{\Re \lambda -\gamma_*} 
\; \hbox{for some $\gamma_*<0$.}
\ee
%NOTE: inserted ``some'', it is all we need.  Might be helpful in some
%situations...

%(TODO: put in full discontinuous bd, with $g$ somewhere and point to it from here.) NO I THINK...

\subsection{Global a priori estimate}\label{s:gapriori}
From \eqref{pdamp} we obtain by Parseval's inequality
immediately a global-in-time a priori bound generalizing \eqref{bpars},
\be\label{gpars}
\int_{-\infty}^\infty e^{2 \gamma t}\|v (t)\|_{H^s}^2 dt 
\leq C \int_{-\infty}^\infty e^{2 \gamma t}(\|f(t)\|_{H^s}^2 + \|v(t)\|_{L^2}^2 dt ,
\ee
and similarly for spatially weighted norms.
In particular, for $v$, $f$ supported on $0\leq t\leq T$, we obtain similarly as in \eqref{bparcor}
\be\label{gparcor}
\int_0^T e^{2 \gamma (T-t}\|v(t)\|_{H^s}^2 dt 
\leq C_2 \int_0^T e^{2 \gamma (T-t)}\big(\|f(t)\|_{H^s}^2+ \|v(t)\|_{L^2} \big) dt,
\ee
which may be recognized as an integral form of the classical damping estimate \eqref{idamp} in
the case of zero initial data.
With small adaptations, as we shall show in Section \ref{s:nests}, this can substitute for the classical
one in closing the standard nonlinear iteration scheme of \cite{MaZ1,MaZ2,Z2,Z3,YZ}.

%(TODO: below is for smooth case, state clearly.
%what about discont? can't diff in $x$ for example... just make rmk, citing
%\cite{YZ}? Or additional subsubsection?) 
\subsection{Linear equivalence}\label{s:linequiv}
Considering \eqref{pres}--\eqref{dpres} as ODE in $x_1$, we make the standard assumption \cite{MaZ1},
valid in the physical setting of hydraulic Saint Venant shocks \cite{YZ} (but not for Saint Venant roll 
waves \cite{JNRYZ,RZ2}), that the principal term $A_x \partial_{x_1}$ is nonsingular, i.e.
\be\tag{A1}\label{A1}
\hbox{\rm $A_1$ is uniformly invertible.}
\ee
Then, we have readily for the linear resolvent equation the following equivalence.

\begin{proposition}\label{lineqprop}
For the linearized resolvent equation, \eqref{pres} with $A_j:=(df_j/dw)(\bar w))$,
under structural assumption \eqref{A1}, damping condition \eqref{pdamp} and high-frequency stability 
condition \eqref{hfres} are equivalent for common $\gamma_*$, $\gamma$ and all $s\geq 1$, 
with possibly different constants $C$.
\end{proposition}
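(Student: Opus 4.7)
The plan is to split the analysis in the frequency variables $(\xi,\tau)$ into high- and low-frequency regimes, exploiting assumption \eqref{A1} to convert resolvent information into ODE-type bounds in $x_1$. The guiding observation is that \eqref{pdamp} is simply \eqref{hfres} with an extra $\|v\|_{L^2}$ term on the right, which is ``small'' at high frequency (so \eqref{pdamp} reduces to \eqref{hfres} there) but encodes essentially all of the low-frequency content (so it allows the high-frequency bound \eqref{hfres} to be promoted to \eqref{pdamp}).

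For $\eqref{pdamp}\Rightarrow\eqref{hfres}$, I would treat two high-frequency subregimes separately. When $|\xi|\ge R$, the definition of $\hat H^s$ directly gives $\|v\|_{L^2}\le (1+R)^{-s}\|v\|_{\hat H^s}$, so the $C(\Re\lambda-\gamma_*)^{-1}\|v\|_{L^2}$ term on the right of \eqref{pdamp} can be absorbed into the left for $R$ large enough depending only on $C$ and the fixed gap $\gamma-\gamma_*$, yielding \eqref{hfres}. When $|\xi|$ is bounded and $|\tau|$ is large, I would instead solve \eqref{pres} algebraically for $\lambda v$ and take $L^2$-norms, obtaining $|\lambda|\|v\|_{L^2}\le \|f\|_{L^2}+C'\|v\|_{\hat H^s}$, where $C'$ depends on the coefficients and on the bound on $|\xi|$, and where $s\ge 1$ is used so that $\|v\|_{H^1}\le \|v\|_{\hat H^s}$. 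Inserting this into \eqref{pdamp} and taking $|\lambda|$ large absorbs the resulting $\|v\|_{\hat H^s}$ term on the right, again yielding \eqref{hfres}.

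For $\eqref{hfres}\Rightarrow\eqref{pdamp}$ the high-frequency region is immediate, since \eqref{hfres} differs from \eqref{pdamp} there only by a nonnegative term added to the right-hand side. On any bounded frequency set $\{|\xi|+|\tau|\le R_0\}$ I would use \eqref{A1} to recast the resolvent equation as the ODE $\partial_{x_1}v=A_1^{-1}\bigl(f-\lambda v-\sum_{j\ge 2}i\xi_j A_j v-Ev\bigr)$, then differentiate in $x_1$ and iterate to produce $\|v\|_{H^s}\le C_{R_0,s}(\|f\|_{H^{s-1}}+\|v\|_{L^2})$; combined with the trivial bound $(1+|\xi|)^s\|v\|_{L^2}\le (1+R_0)^s\|v\|_{L^2}$, this gives $\|v\|_{\hat H^s}\le C(\|f\|_{\hat H^s}+\|v\|_{L^2})$ on the set. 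Since $\Re\lambda-\gamma_*=\gamma-\gamma_*$ is a fixed positive constant here, this is \eqref{pdamp} up to an adjustment of $C$.

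The main technical point will be the bookkeeping: matching the cutoff radii $R$ and $R_0$, the various absorbing constants, and the fixed spectral gap $\gamma-\gamma_*$ so that a single constant suffices uniformly in $(\xi,\tau)$ for each direction, and cleanly handling the overlap between the two high-frequency subregimes (large $|\xi|$ versus large $|\tau|$ with bounded $|\xi|$). Beyond this, no hypothesis other than \eqref{A1}, $s\ge 1$, and smoothness of the coefficients is used, so the underlying content is essentially symbol manipulation on \eqref{pres}.
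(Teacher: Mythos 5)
Your proof is correct and follows essentially the same approach as the paper: use assumption \eqref{A1} to bootstrap $\hat H^s$ control of $v$ from $L^2$ control on bounded frequency sets (so that \eqref{pdamp} holds unconditionally there and the two conditions trivially agree), and at high frequencies absorb the extra $\|v\|_{L^2}$ term either by the $(1+|\xi|)^{s}$ weight in the $\hat H^s$ norm when $|\xi|$ is large, or by pairing $\lambda v$ against the remaining terms of \eqref{pres} when $|\tau|$ is large. Your version is in fact slightly more careful than the paper's in retaining the $\|f\|_{H^{s-1}}$ contribution in the bounded-frequency ODE bound, but the underlying mechanism is identical.
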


\begin{proof}
	Taking the $L^2$ inner product of $A_1 \partial_{x_1}v$ against itself, and noting that all other terms
	in the resolvent equation are bounded multiples of $v$, we find that $\|v\|_{\hat H^1}\leq C\|v\|_{L^2}$
	for any bounded frequencies $\eta, \lambda$. Differentiating the equations and proceeding in the same
	way, we obtain $\|v\|_{\hat H^s}\leq C\|v\|_{L^2}$ for any $s$ such that $A,E$ are smooth enough,
	verifying condition \eqref{pdamp} for all bounded frequencies. 
	Thus, we have reduced the problem to showing equivalence for large frequencies.
	On the other hand, taking the $L^2$ inner product of $\lambda v$ against itself, and noting that
	all other terms in \eqref{pres} are bounded by $C(\|v\|_{\hat H^1}+\|f\|_{L^2})$, 
	we have for $|\lambda|$ sufficiently large that 
	$$
	\|v\|_{L^2}/(\|v\|_{H^1}+\|f\|_{L^2})\leq C/|\lambda|
	$$
	is arbitrarily small, hence the $L^2$ term in \eqref{pdamp} may be absorbed, leaving the apparenty
	stronger condition\eqref{hfres}.  For $|\eta|$ sufficiently large, the implication is still clearer.
\end{proof}

\br\label{weightrmk}
Essentially the same argument yields the result also for weighted norms $\hat H^s_\alpha$ and $L^2_\alpha$.
\er

{\bf Conclusion:} At least at the linear level, high-frequency stability for the resolvent equation
is under condition \eqref{A1} equivalent to satisfaction of the pseudodifferential damping condition \eqref{pres}, 
whereas it does not necessarily imply a (linear version of the) pointwise damping estimate \eqref{damp}.
In Section \ref{s:nlinequiv} below, we examine the passage from linearized to nonlinear damping estimates.

\br\label{parabrmk}
A similar equivalence holds evidently in the strictly parabolic case, or in any situation for which the
highest order derivative term has nonsingular coefficient.
\er

%TODO: put in discussion as well...
\br
Failure of condition \eqref{A1} does not imply failure of damping, nor linear equivalence,
but only of the argument above.
Given the important example of Saint Venant roll waves \cite{JNRYZ,RZ2}, it would be very interesting
to investigate this case further using other techniques.
\er

\subsubsection{The discontinuous case}\label{lineqdisc}
For the case \eqref{dpres} of a single discontinuity, the
analog of \eqref{semigpest} is
\be\label{semigpestdisc}
\|v\|_{\hat H^s}\leq \frac{C(\|f\|_{\hat H^s}+ |g|_{\hat H^s})}{\Re \lambda -\gamma_*}, 
\ee
where $\|f\|_{\hat H^s}:= \|f\|_{H^s}+ (1+|\xi|)^s\|f\|_{L^2}$ and
$|g|_{\hat H^s}:= (1+|\xi|)^s|g|_{L^2}$,
and the analog of \eqref{pdamp} is
\be\label{pdampdisc}
\|v\|_{\hat H^s}\leq \frac{C(\|f\|_{\hat H^s}+ |g|_{\hat H^s} +  \|v\|_{L^2})}{\Re \lambda -\gamma_*}, 
\ee
for some $\gamma_*<0$.
Evidently, the same (tautological) argument as in the proof of 
Proposition \ref{lineqprop} gives linear equivalence of these two conditions
as well, as it makes no use of boundary conditions.

\section{Nonlinear equivalence: the method of symmetrizers}\label{s:nlinequiv}
We now investigate under what circumstances linear damping \eqref{pdamp} may be extended to a nonlinear 
result.
In the general, large-amplitude (possibly discontinuous case), we
present a result generalizing the ``restricted Lumer-Phillips theorem'' noted in \cite{Z2} 
for the local damping estimate \eqref{damp}. 
Namely, we show that when high-frequency estimate \eqref{hfres} can be established by a more general,
frequency-dependent symmetric quadratic energy estimate of the type introduced by Kreiss \cite{K},
then the pseudodifferential damping estimate \eqref{pdamp} may be established by such an energy estimate
as well.
For the smooth, small-amplitude case, we prove the much stronger, and easier, result
that linearized  and nonlinear damping and high-frequency resolvent estimates are all equivalent, 
reducing to the corresponding properties for a constant solution, a linear algebraic condition on the
Fourier symbol in $x_1, \dots, x_d$.

\subsection{Symmetrizers}\label{s:symm}
We begin by repeating essentially verbatim the generalization of Kreiss' method to variable-coefficient 
problems in \cite[\S 2.1]{MZ3}, there stated for the discontinuous case.
In the simpler smooth case, the spatial domain is extended to $(-\infty,\infty)$ and \eqref{clS0} dropped.
Recall now the essence of the ``method of symmetrizers'' as it
applies to general boundary value problems
\begin{equation}
\label{eqabstr}
\partial _x u = G(x)  u + f \, , \quad \Gamma u (0) = 0\, .
\end{equation}
Here, $u$ and $f$ are functions on $[0, \infty[$ values in some Hilbert
space $\cH$, and $G (x) $ is a $C^1$ family of (possibly unbounded)
operators defined on $\cD$,  dense subspace of     $\cH$.

\smallskip

 {\bf A  symmetrizer}   is a family of  $C^1$  functions
$x \mapsto S  (x)$ with values in the space of
operators in $\cH$
such that there are $C_0$, $ \lambda > 0$,  $\delta >
0$ and $C_1$ such that
\begin{eqnarray}
\label{symS0} && \forall x\, ,    \quad S(x) = S(x)^* \quad   {\rm  and }
\quad  \vert S(x) \vert \le C_0 \, ,
\\
\label{reSG0} &&  \forall x\,, \quad 2 \re S(x)G(x) +   \D_x S(x) \ge 2
\theta \Id    \, ,
\\
 \label{clS0} &&  S(0) \ge \delta Id  -
C_1 \Gamma^* \Gamma   \, .
\end{eqnarray}
 In \eqref{symS0}, the norm of $S(x)$ is the norm in the space of bounded
operators in $\cH$. Similarly
$S^*(x)$ is the adjoint operator of $S(x)$.
The notation $\re T = \frac{1}{2} (T + T^*)$
is used in  \eqref{reSG0}  for
the real part  of an operator $T$.
When $T$ is unbounded,  the meaning of
$ \re T \ge \lambda $, is that   all $u \in \cD$
  belongs to the domain of $T$ and satisfies
$\re \big( Tu, u \big) \ge   \lambda  \vert   u \vert^2 $ .
Property \eqref{reSG0} has to be understood in this sense.

\begin{lemma}[\cite{MZ3}] \label{le22}
If there is a symmetrizer $S$, then for all
$u \in C^1_0([0, \infty[; \cH ) \cap C^0([0, \infty[; \cD)$, one
has
\begin{equation}
\label{estabstr}
 \theta \Vert u \Vert^2 + \delta \vert u (0) \vert^2
\le \frac{C_0^2}{\theta } \Vert f  \Vert^2 \, + C_1 \vert \Gamma u(0)
\vert^2\, ,
\end{equation}
  where $f := \partial _x u - G u$.  Here, $\vert \, \cdot \, \vert $ is the norm in $\cH$
and $\Vert \, \cdot \, \Vert $ the norm in $L^2([0, \infty[ ; \cH)$.
\end{lemma}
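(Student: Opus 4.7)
The plan is to run the classical symmetrizer energy estimate, namely to compute $\frac{d}{dx}\langle S(x)u(x),u(x)\rangle_\cH$, integrate in $x$ over $[0,\infty)$, and assemble the three listed properties \eqref{symS0}--\eqref{clS0} to bound the desired quantities on the left.

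First, using $\partial_x u = Gu + f$ together with self-adjointness of $S$ from \eqref{symS0}, I would expand
\begin{equation*}
\frac{d}{dx}\langle Su,u\rangle \;=\; \langle (\partial_x S)u,u\rangle + 2\re\langle S(Gu+f),u\rangle \;=\; \langle\bigl(\partial_x S+2\re(SG)\bigr)u,u\rangle + 2\re\langle Sf,u\rangle.
\end{equation*}
The regularity hypothesis $u\in C^0([0,\infty[;\cD)$ is exactly what is needed to make $\re(SG)u$ well defined and to justify the computation pointwise in $x$ via \eqref{reSG0}. Invoking \eqref{reSG0} on the first term gives a lower bound of $2\theta|u|^2$, and on the second term I would use Cauchy--Schwarz together with the uniform bound $|S|\le C_0$ from \eqref{symS0} and a Young inequality to absorb $\theta|u|^2$, so that
\begin{equation*}
\frac{d}{dx}\langle Su,u\rangle \;\ge\; \theta|u|^2 - \frac{C_0^2}{\theta}|f|^2.
\end{equation*}

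Next I integrate from $x=0$ to $x=\infty$. Because $u\in C^1_0$ has compact support, the boundary term at $\infty$ vanishes, so
\begin{equation*}
-\langle S(0)u(0),u(0)\rangle \;\ge\; \theta\,\Vert u\Vert^2 - \frac{C_0^2}{\theta}\,\Vert f\Vert^2,
\end{equation*}
i.e., $\langle S(0)u(0),u(0)\rangle \le -\theta\Vert u\Vert^2+\tfrac{C_0^2}{\theta}\Vert f\Vert^2$. On the other hand, the boundary property \eqref{clS0} gives the lower bound $\langle S(0)u(0),u(0)\rangle \ge \delta|u(0)|^2 - C_1|\Gamma u(0)|^2$. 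Chaining these two inequalities and rearranging yields exactly \eqref{estabstr}.

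I do not expect a serious obstacle, since this is the textbook symmetrizer calculation; the only points requiring care are (i) choosing the Young constant so that the bad term $2\re\langle Sf,u\rangle$ is split as $\theta|u|^2 + \tfrac{C_0^2}{\theta}|f|^2$, which is what produces the precise constant $C_0^2/\theta$ in \eqref{estabstr}, and (ii) justifying the pointwise meaning of $\re(SG)u\ge\theta|u|^2$ on the dense domain $\cD$ as spelled out after \eqref{reSG0}, rather than treating $G$ as bounded. The compact-support-in-$x$ assumption on $u$ takes care of the upper endpoint in the integration, and no density argument beyond the stated regularity class is required for the bound itself.
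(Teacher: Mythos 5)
Your proof is correct and follows essentially the same route as the paper's: take the $\cH$-inner product of $Su$ with the equation, integrate in $x$, use the compact support to kill the boundary term at infinity, and apply \eqref{symS0}--\eqref{clS0} plus Young's inequality to produce exactly the constants in \eqref{estabstr}. The only cosmetic difference is that you apply Cauchy--Schwarz and Young pointwise in $x$ before integrating, whereas the paper integrates first and then applies them at the level of $L^2$ norms; the net bookkeeping is identical.
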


\begin{proof}[Proof from \cite{MZ3}]
Taking the scalar product  of $S u$  with 
\eqref{eqabstr} and integrating over $[0, \infty)$, \eqref{symS0} gives
\begin{equation}
\label{identener}
\begin{aligned}
- ( S(0) & u(0), u(0))  = \int \D_x ( S u, u) dx
%\cr &
	=\int \big( (2   \re S G + \D_x S) u, u \big) dx
 + 2 \re \int \big( S f, u \big) dx\,.
\end{aligned}
\end{equation}
By \eqref{reSG0},
$
\int \big( (2   \re S G + \D_x S) u, u \big) dx  \ge
2  \theta  \Vert  u  \Vert^2
 \, .
$
By \eqref{clS0} and the boundary condition
$ \Gamma u(0)=0$,
$
 ( S(0) u(0), u(0)) \ge \delta \vert u (0) \vert^2  - C_1 \vert \Gamma
u(0) \vert^2
\,.
$
 By \eqref{symS0}
$
 2 \Big\vert  \int \big( S f, u \big) dx \Big\vert \le 2 C_0 \Vert f\Vert
\, \Vert u \Vert \le
	{\frac{C_0^2} { \theta}} \Vert f\Vert^2 + \lambda \Vert u\Vert^2 \,.
$
Thus identity \eqref{identener} implies the energy estimate \eqref{estabstr}.
\end{proof}

\br\label{pwrmk}
The requirement $S\in C^1$ in Lemma \ref{le22} may evidently be relaxed to
\emph{$S$ continuous and piecewise $C^1$}, an observation that is useful
in patching together symmetrizers on different $x$-domains.
For, the resulting new boundary terms in the integration \eqref{identener}
then cancel by continuity.
\er

\subsection{Nonlinear equivalence}\label{s:nlequiv}
In the present setting, restricting for ease of exposition to the simpler, smooth case, 
the relevant family of problems is
\ba\label{nlpres}
 \partial_{x_1} v - G(x_1;\eta,\lambda,  v) v &:=\tilde f:=A_1^{-1}f,\\
 G(x_1;\eta,\lambda,  v)&:=-A_1^{-1}\Big( \lambda  + \sum_{j=2}^d i\xi_j A_j(\bar w+  v)  + E(\bar w)\Big),
 \ea
 with $|\eta,\lambda|$ sufficiently large, $\Re \lambda \geq \lambda_*$, $\lambda_*<-\theta<0$, and $\|v\|_{\hat H^{s_0}}$ sufficiently small, and $\mathcal{H}$ the $L^2$ norm in $x_1$, augmented with a second family
\ba\label{nlspres}
 \partial_{x_1} v - G(x_1;\eta,\lambda,  v)  \partial_{x_1}^s v &:=\tilde f_s,\\
 G(x;\eta,\lambda,  v)&:=-A_1^{-1}\Big( \lambda  + \sum_{j=2}^d i\xi_j A_j(\bar w+  v)  + E_s(\bar w)\Big),
 \ea
 obtained from the the $\partial_{x_1}^s$ differentiated perturbation equations, which differ in form 
 only in the coefficient $E_s$ replacing $E$, involving additional summands of order $|\partial_{x_1}\bar w|$
 and in the forcing $\tilde f_s$ replacing $\tilde f$, consisting of $\partial_{x_1}^s\tilde f$ plus lower derivative
 terms.
 Note that the active dependence on $v$ is only through $x_1$, with $x_2,\dots, x_d$ 
 playing the role of fixed parameters. {\it Thus, this is indeed a family of one-dimensional problems},
 with the difference from the linear version consisting only in dependence on the small unknown function $v$.

 %TODO: think we need only the second one... NO, better this way (order of appearance).

 \subsubsection{The quantitative estimates of Lannes}\label{s:quant} 
 At this point, we recall the useful quantitative estimates of Lannes \cite{L}
 bounding pseudodifferential operators and commutators for symbols of limited 
 smoothness; see also \cite{Sr,BS,Met5}.
 The theory of Bony \cite{B} gives estimates with coefficients uniformly bounded so long as the Lipshitz norms 
 of the symbols are uniformly bounded. However, for nonlinear iteration, sharper bounds are often desirable.
 The results of \cite{L} state that for $s$ sufficiently large, commutators, adjoints, etc., 
 associated with paradifferential symbols of total order $s+1$ have $H^s$ operator norm bounded by
 the $H^{s-1}$ norm of the {\it gradient of} the symbol plus the $H^{s-1}$ norm of the symbol.
Here, one may understand the first, gradient term as a bound on the first corrector in the Taylor
expansion of the commutator, and the second, lower-order term as bound on the remainder 
($\rho_1$ in Lannes' notation).
This bound is controlled by but does not control the lower- (than the total $s+1$ of the composition) order 
 $H^s$ norm of the symbol. 
 The latter distinction becomes important in the small-amplitude case discussed in Section \ref{s:smallamp} below.

For further details, see the commutator estimates of \cite[Thm. 7, p. 529]{L}, 
the quantity $H^{t_0}_{ref}$ defined in \cite[Eq. (4.7), p. 519]{L}, and the
quantities $n$ and $N$ on which $H^{t_0}_{ref}$ depends, defined in \cite[Eqs. (2.2) 
and (2.4), pp. 502 and 503]{L};
see also \cite[Thms. 1 and 2, pp. 514 and 515]{L} and \cite[Cor. 30, p. 515]{L} bounding the
action of operators in Sobolev norms in terms of Sobolev norms of their symbols.
For the main, $H^s$ (rather than $H^{s-1}$ of gradient) bounds, 
see also the alternative, more streamlined treatment of Sroczinski in \cite[\S 2]{Sr}, 
focused entirely on Sobolev-based bounds.
For general discussions of pseudo- and paradifferential calculus,
see \cite[Appendix C]{BS} and \cite{Met5}.  
%BVP, ref. for paradiff from Matthias. Appendix C is where proved.

\medskip

 {\it Thus, above,} $s_0 \leq s-1$ is taken sufficiently large that the $H^{s_0}$ norm in $x,t$ controls $L^\infty$
 and Lipschitz norms and also commutator estimates,
 and our goal is to find symmetrizers $S=S(x_1;\eta,\lambda,  v)$ satisfying
 \eqref{symS0} and \eqref{reSG0}. If successful, this yields high-frequency damping estimates for $v$ in 
 $\hat H^s$, hence in $H^s$, for $v$ small enough in $H^{s_0}$. 

 \subsubsection{Reduction to perturbed linear case}\label{s:reduct}
 We discuss only the basic $L^2$ bound as the others go similarly. Denote the desired symmetrizers
 as $S(x_1;\eta,\lambda, v)$, so that the linearized resolvent equation 
 corresponds to the case with $v$ set to zero, i.e., $S=S(x_1;\eta,\lambda, 0)$, $G=G(x_1;\eta,\lambda, 0)$,
replacing $\D_{x_1}$ in \eqref{reSG0} with the partial derivative $\partial_{x_1}$.
That is, treat the $v$ argument entirely as a parameter in a family of modified linear resolvent equations.
 Then, we have the following partial equivalence result analogous to that for the time-local version in \cite{Z2}.

 \begin{proposition}\label{nlprop}
There exist symmetrizers yielding pseudodifferential damping \eqref{pdamp}
for the original problem for $\|v\|_{H^{s_0}}$ 
sufficiently small if and only if there exist symmetrizers for the family of 
modified linear problems $G=G(x_1;\eta,\lambda,v)$ with $\|v\|_{H^{s_0}}$ 
sufficiently small.
 \end{proposition}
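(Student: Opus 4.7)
The plan is to establish the equivalence by separating the easy direction from the substantive one, and using paradifferential quantization together with the quantitative estimates of Lannes recalled in Section \ref{s:quant} to bridge the gap between the parameterized linear family and the fully nonlinear problem.

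\textbf{Forward direction (nonlinear to parameterized linear).} Since the symmetrizer conditions \eqref{symS0}--\eqref{reSG0} are pointwise algebraic inequalities in the arguments $(x_1;\eta,\lambda,v)$, any symmetrizer $S(x_1;\eta,\lambda,v)$ that witnesses nonlinear damping automatically satisfies the same conditions when $v$ is viewed as a frozen parameter function in some small $H^{s_0}$-ball; this immediately yields the symmetrizer for the modified linear family \eqref{nlpres}--\eqref{nlspres}. So this direction is essentially a matter of bookkeeping.

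\textbf{Main direction (parameterized linear to nonlinear).} Given a family $S(x_1;\eta,\lambda,v)$ of symmetrizers for the modified linear problems, I would quantize it in the tangential variables $(x_2,\ldots,x_d,t)$ using Bony's paradifferential calculus --- holding $x_1$ as a real parameter and substituting the true solution $v=v(x,t)$ for the abstract $v$-argument --- to produce a paradifferential operator $S^\#$ with principal symbol $S(x_1;\eta,\lambda,v(x,t))$; similarly, $G$ quantizes to $G^\#$. The scheme is then to reproduce the energy identity \eqref{identener} with $S^\#$ in place of $S$ and paradifferential composition in place of pointwise multiplication, integrated over $(x_1,x_2,\ldots,x_d,t)$. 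The $L^2$ boundedness of $S^\#$ follows from \eqref{symS0}, while the positivity statement \eqref{reSG0}, combined with Garding's inequality, delivers the main $\theta\|u\|^2$ lower bound.

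The new terms, relative to the linear-parametric calculation, are paradifferential commutators $[S^\#,G^\#]$, $[S^\#,\partial_{x_1}]$, and composition remainders such as $(SG)^\# - S^\# G^\#$. By the quantitative bounds of Lannes \cite{L} each has operator norm on $H^s$ controlled by the $H^{s-1}$ norm of the gradient in $v$ of the symbol, and hence by $C\|v\|_{H^{s_0}}\|u\|_{H^s}$ once $s_0$ is taken large enough to dominate the Lipschitz and commutator constants in Section \ref{s:quant}. These remainders thus carry a small prefactor $\|v\|_{H^{s_0}}$, and for $\|v\|_{H^{s_0}}$ sufficiently small they are absorbed into the main $\theta\|u\|^2$ term, leaving \eqref{estabstr} intact up to this absorption and the analogous treatment for \eqref{nlspres}.

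\textbf{Main obstacle.} The delicate point is the bookkeeping at the highest-order level \eqref{nlspres}. Naively, $[S^\#,\partial_{x_1}^s]v$ is of order $s$ and the standard Bony bounds would lose derivatives; it is precisely the gradient-type sharpening of Lannes --- bounding remainders by the $H^{s-1}$ norm of the \emph{gradient} of the symbol rather than by the full $H^{s+1}$ norm --- that lets the remainder be expressed as $C\|v\|_{H^{s_0}}\|v\|_{H^s}$ and absorbed for small $\|v\|_{H^{s_0}}$. A secondary concern is the coefficient $E_s$ in \eqref{nlspres}, which carries a $|\partial_{x_1}\bar w|$ factor: one must verify that this fixed smooth factor does not disturb the uniformity of the symmetrizer constants across the parameterized family, and that all symbol norms appearing in the Lannes estimates are indeed dominated by $\|v\|_{H^{s_0}}$ uniformly in the high-frequency regime $(\eta,\lambda)$ under consideration. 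Handling these uniformity issues consistently across $s$ constitutes the bulk of the technical work behind the proposition.
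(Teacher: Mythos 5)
Your proof takes essentially the same route as the paper's, and correctly identifies the two key ingredients: (i) paradifferential quantization of the symmetrizer family in the tangential variables with the true solution $v$ substituted for the frozen parameter, and (ii) the gradient-sharpened Lannes commutator bounds that make the quantization errors proportional to $\|v\|_{H^{s_0}}$ and hence absorbable. The paper's actual proof is much terser—it simply notes (a) that replacing the partial derivative $\partial_{x_1}$ with the total derivative $\D_{x_1}$ in \eqref{reSG0} introduces an error of order $\|\partial_{x_1} v\|_{L^\infty}$, absorbable for $\|v\|_{H^{s_0}}$ small, and (b) that symbol commutator errors are of order $\|\nabla_{y,t}v\|_{H^{s_0-1}}$ and likewise absorbable—relying on the surrounding discussion of Lannes' estimates to fill in the mechanism. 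Your writeup spells out that mechanism, which is in the spirit of the paper.

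Two small points of calibration. First, the paper highlights explicitly the distinction between $\partial_{x_1}S$ (used in the modified linear family, where $v$ is frozen) and $\D_{x_1}S$ (the total $x_1$-derivative including chain-rule through $v(x,t)$, which is what the integration by parts in Lemma~\ref{le22} produces): the difference is $(\nabla_v S)\,\partial_{x_1}v$, and its smallness is what bridges the two formulations. Your treatment folds this into the commutator $[S^\#,\partial_{x_1}]$, which is fine, but making the $\D_{x_1}$ versus $\partial_{x_1}$ bookkeeping explicit is cleaner and is how the paper frames it. Second, your ``main obstacle'' paragraph slightly misreads the role of $[S^\#,\partial_{x_1}^s]$: the system \eqref{nlspres} is already written in terms of $\partial_{x_1}^s v$ as the unknown (with $E_s$ and $\tilde f_s$ collecting the lower-order commutator terms from differentiating the equation), so the relevant commutators are again between $S^\#$ and the coefficient operators, not a large-order commutator with $\partial_{x_1}^s$. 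This is a presentational issue and not a gap; the absorption argument you describe is the right one.
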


 \begin{proof}
	 We first note that we may change $\partial_{x_1}$ back to $\D_{x_1}$ with no harm when
	 $\|v\|_{H^{s_0}}$, hence $\|\partial_{x_1}v\|_{L^\infty}$ is sufficiently small, 
since this just adds an absorbable error term of order 
	 $\|\partial_{x_1}v\|_{L^\infty}\ll 1$.
	 %NOTE: here because $\D_{x_1}$ acts on symbol, while \partial_{x_1} acts on operator... or?
	 %Not sure we really need it!!! check MetZ, is it there?
	 As symbol commutator errors are of order proportional to the $H^{s_0-1}$ norm of $\nabla_{y,t} v$,
	 these also may be absorbed when $\|v\|_{H^{s_0}}$ is sufficiently small, giving 
	 pseudodifferential damping by the same energy estimate as in the proof of Lemma \ref{le22}.
 \end{proof}

 \br\label{prabrmk}
 In the strictly parabolic case, there is no need for quantitative commutator estimates, as favorable terms
 are automatically of higher order. In this case we may set $v_0=0$ in Proposition \ref{nlprop} above.
 \er

\subsection{The smooth small-amplitude case}\label{s:smallamp}
In the smooth small-amplitude case that the entire profile $\bar w(x_1)$ 
lies close to a constant (necessarily equlibrium) solution $w(x_1)\equiv w_0$, with 
$\|\bar w-w_0\|_{W^{1,\infty}}$ sufficiently small, we can make a much 
stronger statement.  For, in this case we may paralinearize in all coordinates 
$x_1,\dots, x_d, t$ to reduce to an entirely symbolic computation, since 
commutator terms, controlled by Lipshitz norm are then absorbable errors 
in the estimate.

Namely, we reduce to the family of problems 
\be\label{sG}
\tilde G v=f, \qquad \tilde 
G= \tilde G(\tilde \eta, \lambda, \tilde v)= 
\lambda + \sum_{j=1}^d i\eta_j A_j(w_0 + \tilde v) -E(w_0+\tilde v),
\ee
with $\tilde v:= (\bar w-w_0) +v$, $\tilde \eta:=(\eta_1,\dots, \eta_d)$,
and we seek energy estimates bounding $\|v\|_{H^s}$ by 
$C(\|f\|_{H^s}+\|v\|_{L^2})$ for $\Re \lambda\geq -\theta$.
Similarly as for the partially linearized system, we can accomplish this for 
bounded $\tilde \eta$ by taking an energy estimate of $\lambda\, \Id v$ 
against the equations for $|\lambda|$ large, whereas it is trivial for 
$|\lambda|$ bounded (so that all frequencies $(\tilde \eta,\lambda)$ are bounded,
and $H^s$ norms therefore bounded by $L^2$ norms times bounded factor 
$C(1+|\tilde \eta,\lambda|^s)$).  
Hence we need only consider the case $|\eta|$ sufficiently large.

In this fully paralinearized case, we thus seek symmetrizers 
$S(\tilde \eta,\lambda,\tilde v)$ satisfying 
\be \label{constreSG0} 2 \re S(\tilde \eta,\lambda,\tilde v)
\tilde G(\tilde \eta,\lambda,\tilde v) \ge 2 \theta \, \Id
\ee
for all $|\tilde v|$ sufficiently small and $\tilde \eta\in \R^d$ sufficiently large.
Note, by the existence theory, that small-amplitude relaxation shocks are always 
smooth; hence, we need not consider the case of subshocks and 
boundary condition $\Gamma$.

By the above discussion, we have immediately the following simplified version of Proposition \ref{nlprop}.

\begin{proposition}\label{constequivprop}
For a fixed equilibrium state $w_0$ and $\|\bar w-w_0\|_{W^{1,\infty}}$ 
sufficiently small, there exist symmetrizers yielding pseudodifferential damping
\eqref{pdamp} for the original problem for $\|v\|_{H^{s_0}}$ sufficiently small 
if and only if there exist symmetrizers $\tilde S(\tilde v_0)$ in the sense of
\eqref{symS0}-\eqref{constreSG0} for the family of modified constant problems 
$\tilde G=\tilde G(\tilde \eta,\lambda,\tilde v_0)$ with $\tilde v_0$ constant
and sufficiently small.
\end{proposition}

Moreover, we can greatly sharpen this result under the following
mild structural assumptions on the system \eqref{blaw}.
Let $\sigma(M)$ denote the spectrum of 
a matrix or linear operator $M$, and 
$$
\tilde T=\tilde T(w,\eta):= \sum_{j=1}^d \eta_j A_j.
$$
Then, we assume for $w$ sufficiently close to $w_0$:

\be\tag{A2}\label{A2}
\hbox{\rm Eq. \eqref{blaw} is {\it hyperbolic},}
\ee
i.e., $\sigma(\tilde T)$ is real and semisimple, and
\be\tag{A3}\label{A3}
\hbox{\rm Eq. \eqref{blaw} is of {\it constant multiplicity}} 
\ee
i.e., $\sigma(\tilde T)$ is constant multiplicity, hence
eigenvalues and associated eigenvectors depend smoothly on $(w,\eta)$ and 
analytically on $\tilde \eta$, with eigenvectors linearly independent 
(cf. \cite[Def. 2.2]{MZ5}).
Under these conditions, we have the following definitive conclusions, reducing
pseudodifferential damping for small-amplitude waves
to high-frequency stability of the linearized dispersion relation about $w_0$.

\begin{theorem}\label{constequivcor}
Assuming \eqref{A2}-\eqref{A3}, for a fixed equilibrium state $w_0$ and $\|\bar w-w_0\|_{W^{1,\infty}}$ 
sufficiently small, there exist symmetrizers yielding pseudodifferential
damping \eqref{pdamp}  for the original problem 
for $\|v\|_{H^{s_0}}$ sufficiently small if and only if there exist symmetrizers in the sense of
\eqref{symS0}-\eqref{constreSG0} for the family of modified constant problems 
$\tilde G=\tilde G(\tilde \eta,\lambda,0)$. This in turn is equivalent to the high-frequency stability condition
\be\label{chf}
\hbox{$\Re \sigma (\tilde T(w_0, \tilde \eta)) \leq -\theta <0$ for 
$\tilde \eta\in \R^d$ and $|\tilde \eta|$ sufficiently large.}
\ee
\end{theorem}

\begin{proof}
First, we note that high-frequency stability for $w(x)\equiv w_0$ is equivalent 
by Parseval's identity to uniform invertibility of 
$(\lambda + i\tilde T(\tilde \eta))$ for $|\tilde \eta,\lambda|$ large,
and $\Re \lambda \geq -\theta$, which implies in particular \eqref{chf}.
On the other hand, under the constant multiplicity assumption \eqref{A3},
there exist uniformly bounded and nonsingular changes of coordinates $R(\eta)$ 
diagonalizing $\tilde A(\tilde \eta):=\sum_{j=1}^d \eta_j A_j$, hence, by standard 
matrix perturbation theory \cite{Kat}, also changes of coordinate $\tilde R$ 
diagonalizing $\tilde T(\eta):= i \tilde A(\tilde \eta)- E$ when $\eta$ is 
sufficiently large, with, moreover, spectra given to order $O(1)$ by the 
pure imaginary spectra of $i A(\eta)$ plus the diagonal 
entries of $RER^{-1}$.\footnote{Diagonal in the case of simple eigenvalues, 
block-diagonal for higher multiplicity eigenvalues.}
It follows that 
$$
\Re(\tilde R \tilde G(\tilde \eta,\lambda){\tilde R}^{-1}=
\Re \big( {\tilde R}(\lambda + \tilde T(\tilde \eta)){\tilde R}^{-1} \big)\geq 
\tilde \theta >0
$$
for $|\tilde \eta|$ sufficiently large and $\Re \lambda \geq -\theta_2 >0$, and thus
$ \Re(\tilde S \tilde G(\tilde \eta,\lambda)) \geq  \theta_2 >0 $ for 
$\tilde S:={\tilde R}^*\tilde R$.
\end{proof}

We remark that the high-frequency stability condition \eqref{chf} is implied by but 
much weaker than the {\it dissipativity condition} of Kawashima-Shizuta
\cite{Ka,KaS,Ze}. Thus,  Theorem \ref{constequivcor} recovers and significantly 
generalizes all previous cases, \cite{KwZ,Kw,MaZ1,YZ}, treated for small 
relaxation waves.

\subsubsection{The symmetric case}\label{s:symmcase}
For Friedrichs symmetrizable systems, $P A(\tilde \eta)$ symmetric for $P$ 
symmetric positive definite, and $\Re P E\geq 0$ with no eigenvector of
$T(\eta)$ lying in $\ker \Re P E$ (the generalized 
``Kawashima genuine coupling condition''), 
\eqref{A3} may be weakened in Theorem \ref{constequivcor} to
\be\tag{A3'}\label{A3'}
\hbox{\rm Eq. \eqref{blaw} is {\it geometrically regular},} 
\ee
i.e., $\sigma(\tilde A)$ and associated eigenvectors depend smoothly on 
$(w,\eta)$ and analytically on $\tilde \eta$, with eigenvectors linearly 
independent (cf. \cite[Def. 2.2]{MZ5}).

For, taking without loss of generality $P=\Id$,
in this case, matrix perturbation theory at an $m$-fold eigenvalue
$\tilde \alpha_j(\tilde \eta)$ of $\tilde A$ gives an associated
diagonal block of $\tilde G(\lambda, \tilde \eta)$
for $|\tilde \eta|$ large of $\lambda + i\alpha \Id_{m\times m}
- \tilde R^* E \tilde R$
plus higher order terms, where $\tilde R_j$ is an $n\times m$ eigenblock of 
orthonormal right eigenvectors of $\tilde T$. But, then
$$
\Re \tilde R_j^* E \tilde R_j= \tilde R_j^* \Re E \tilde R_j= \geq 0
$$
by $\Re E\geq 0$, while $\tilde R_j^* \Re E \tilde R_j\neq 0$ by the 
genuine coupling condition, hence
\be\label{strongchf}
\Re \tilde R_j^* E \tilde R_j= \tilde R_j^* \Re E \tilde R_j= \geq \theta>0
\ee
for some uniform $\theta$, a strengthened version of \eqref{chf}.
This gives again
$$
\Re(\tilde R \tilde G(\tilde \eta,\lambda){\tilde R}^{-1}=
\Re \big( {\tilde R}(\lambda + \tilde T(\tilde \eta)){\tilde R}^{-1} \big)\geq 
\tilde \theta >0
$$
for $|\tilde \eta|$ sufficiently large and $\Re \lambda \geq -\theta_2 >0$, and thus
$ \Re(\tilde S \tilde G(\tilde \eta,\lambda)) \geq  \theta_2 >0 $ for 
$\tilde S:={\tilde R}^*\tilde R$,
similarly as in the final step of the proof of Theorem \ref{constequivcor}. 

This may be compared with the standard ($G$ rather than $\tilde G$)
symmetrizer construction of \cite[Appendix E]{MZ5} 
for hyperbolic-parabolic, or ``viscous'' conservation laws
\be\label{viscous}
\d_t f_0(w) + \sum_{j=1}^d \d_{x_j} f_j(w)=
\sum_{j,k=1}^d \d_{x_j}(B_{j,k}(w)\d_{x_k} w)
\ee
in the {\it low-frequency} regime under analogous symmetry conditions,
and assuming geometric regularity alone.
Recall, for the Fourier symbol $\tilde T(\eta)$, that
high-frequency symmetrizers for \eqref{blaw} have structure 
essentially identical to that for the linearized symbol of 
\eqref{viscous} in the low-freqency regime,
in the case that $B_{jk}=E$ is independent of $j,k$.

\subsection{Partial converse and connection to exponential dichotomies}\label{s:partcon}
The construction of high-frequency symmetrizers can be rather complicated in the large-amplitude
case, and has up to now been done largely ``by hand''; see for example \cite{GMWZ1,GMWZ2,GMWZ3}.
Thus, it is a bit unclear at the outset when or how one could construct such a symmetrizer.
In this section we discuss these issues, pointing out a connection to exponential dichotomies
that both illuminates the original definition of symmetrizers described in Section \ref{s:symm} and gives
useful guidance for their construction.

In the case of classical Kreiss symmetrizers with constant-coefficients, there is a strong
link to Lyapunov theory motivating the introduction of (constant) symmetrizers.
See, for example, the discussion in \cite{MZ3} of intermediate frequencies for which there 
exists a spectral gap for the associated coefficient matrix for the linearized system.
Here, spectral gap is replaced by the variable-coefficient analog of exponential dichotomies,
and Lyapunov theory by the relation of stable and contractive semigroups.

Given a linear system of ODE's
\be\label{gsys}
(d/dx)w=G(x)w 
\ee
defined on the line, with solution operator $\cS(x,y)$,
we recall that an exponential dichotomy is a pair of complementary projections
$P_+(x)$ and $P_-(x)$ such that
\be\label{dcomm}
P_+(x)\cS(x,y)= \cS(x,y)P_+(y), \qquad P_-(x)\cS(x,y)= \cS(x,y)P_-(y)
\ee
for all $x,y$, and
\ba\label{dprop}
|P_+\C_S(x,y)- P_+(y)|&\leq Ce^{-\theta|x-y|},\quad x>y,\\
|P_-\C_S(x,y)- P_-(y)|&\leq Ce^{-\theta|x-y|},\quad x<y.
\ea

Then, we have the following result.

\begin{proposition}[Smooth case]\label{lyapprop}
	For smooth profiles,
	existence of pointwise exponential dichotomies on $(-\infty,\infty)$
	is necessary for
	the high-frequency resolvent bound \eqref{hfres}, which is necessary for existence of
	symmetrizers.  On the other hand, existence of uniformly smooth and bounded global exponential
	dichotomies is sufficient for existence of symmetrizers.
\end{proposition}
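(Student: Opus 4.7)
The proposition contains three chained claims, and I address them in turn. The middle implication --- existence of symmetrizers implies the resolvent bound \eqref{hfres} --- is essentially immediate from Lemma \ref{le22}: after Fourier transform in $(x_2,\dots,x_d,t)$, the linearized resolvent equation takes the one-dimensional form \eqref{eqabstr} on $x_1\in\RR$ (extending to the line in the smooth case as in Section \ref{s:symm}), and the energy estimate \eqref{estabstr} directly gives $\|v\|_{L^2(x_1)}\le C\|f\|_{L^2(x_1)}/(\Re\lambda-\gamma_*)$ uniformly in $(\eta,\lambda)$; differentiating the equation and applying a symmetrizer to the differentiated system, as in Section \ref{s:reduct}, upgrades this to $\hat H^s$ bounds.

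For the first claim (resolvent bound implies pointwise dichotomies), I would run a quasi-mode argument against \eqref{hfres}. Suppose for contradiction that at some $x_0$ and some high frequency $(\eta_0,\lambda_0)$ with $\Re\lambda_0\ge -\theta$, the frozen matrix $G(x_0;\eta_0,\lambda_0)$ possesses a purely imaginary eigenvalue $\mu$ with unit eigenvector $\phi$. Then the localized quasi-mode $v_n(x_1)=\chi_n(x_1-x_0)\me^{\mu(x_1-x_0)}\phi$, with $\chi_n$ an $L^2$-normalized slowly-varying cutoff, satisfies $(\partial_{x_1}-G(x_1;\eta_0,\lambda_0))v_n=f_n$ with $\|f_n\|/\|v_n\|\to 0$, contradicting the uniform invertibility in \eqref{hfres}. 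Hence at every $x$ the frozen matrix has a spectral gap off the imaginary axis, which by classical constant-coefficient Lyapunov theory is equivalent to an exponential dichotomy for the frozen ODE at that point.

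For the third claim (uniformly smooth and bounded global dichotomies imply symmetrizers), I would perform a Coppel--Massera--Palmer style Lyapunov construction. Using uniform $C^1$ smoothness of $P_\pm(x)$, build a uniformly bounded $C^1$ change of frame $\Sigma(x)$ sending $\ran P_+(x)$ and $\ran P_-(x)$ to fixed complementary coordinate subspaces; in these coordinates $G$ becomes block-diagonal with a stable block $G_s$ and an unstable block $G_u$, modulo off-block-diagonal corrections of order $|\partial_x\Sigma|$ that are absorbable at high frequency against the order-$|\eta|$ spectral gap between $G_s$ and $G_u$. On each block, define
\[
S_s(x)=-\int_x^{\infty}\cS_s(y,x)^*\cS_s(y,x)\,dy,\qquad S_u(x)=\int_{-\infty}^x\cS_u(y,x)^*\cS_u(y,x)\,dy,
\]
both well-defined by \eqref{dprop}. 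A direct differentiation using $\partial_x\cS_s(y,x)=-\cS_s(y,x)G_s(x)$ (and analogously for $\cS_u$) gives $\partial_xS_s+G_s^*S_s+S_sG_s=\mathrm{Id}$ and $\partial_xS_u+G_u^*S_u+S_uG_u=\mathrm{Id}$. Assembling $S(x)=\Sigma(x)^*\,\blockdiag(S_s(x),S_u(x))\,\Sigma(x)$ then yields a symmetric, uniformly bounded $C^1$ symmetrizer satisfying $2\re(SG)+\partial_xS\ge 2\theta\,\mathrm{Id}$, which is \eqref{reSG0}.

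The main obstacle is uniformity of the Lyapunov construction in the high-frequency parameter $(\eta,\lambda)$. The integrals converge only as well as the dichotomy exponent $\theta$ permits, and the off-block-diagonal corrections from $\partial_x\Sigma$ must remain small compared with the spectral separation between $G_s$ and $G_u$; uniformity of the smoothness and boundedness bounds on $P_\pm$ is precisely what is needed to make both hold. Its failure --- degeneration of the dichotomy as $(\eta,\lambda)$ approaches a turning point of the symbol --- is exactly the obstruction flagged in Sections \ref{s:potential} and \ref{s:disc}, and explains why the clean equivalence between dichotomies and symmetrizers degrades to a one-way implication in general.
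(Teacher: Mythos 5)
Your treatment of the middle implication (symmetrizers $\Rightarrow$ \eqref{hfres}) and of the third implication (uniform smooth dichotomies $\Rightarrow$ symmetrizers) matches the paper in substance: the latter is the same Coppel--Lyapunov construction the paper uses, with $T(x)$ (your $\Sigma(x)$) built from bases of $\range P_\pm(x)$, quadratic forms $Q_\pm$ defined as integrals of $\cS^*\cS$ over half-lines, and assembly with opposite signs on the stable and unstable blocks. One small point: because $T$ is aligned with the invariant subspaces of the dichotomy and those subspaces are flow-invariant by \eqref{dcomm}, the transformed system is \emph{exactly} block-diagonal; the off-block-diagonal corrections of order $|\partial_x\Sigma|$ you flag as needing absorption at high frequency are not actually present, so that part of your argument is unnecessary (though harmless).

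Where your proposal has a genuine gap is the first implication. The proposition's ``pointwise exponential dichotomies'' means: for each fixed high frequency $(\eta,\lambda)$, the variable-coefficient ODE system \eqref{gsys} in $x_1$ admits an exponential dichotomy on the line --- ``pointwise'' here refers to fixing the frequency parameter, in contrast to the uniform-in-frequency smooth dichotomies required for the converse direction. Your quasi-mode argument establishes something different and weaker: that the \emph{frozen-coefficient} matrix $G(x_0;\eta,\lambda)$ has a spectral gap off the imaginary axis at every $x_0$. Spectral gap of the frozen symbol is a necessary consequence of dichotomy (for slowly varying coefficients it is close to sufficient), but it is not by itself an exponential dichotomy of the $x_1$-dependent system, and your passage ``which by classical constant-coefficient Lyapunov theory is equivalent to an exponential dichotomy for the frozen ODE at that point'' changes the object under discussion. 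The paper instead invokes Palmer's theorem \cite{P}, which states precisely that bounded invertibility of $\partial_{x_1}-G(\cdot;\eta,\lambda)$ on $L^2(\RR)$ (which is what \eqref{hfres} gives at the fixed frequency) is equivalent to the existence of an exponential dichotomy for the nonautonomous system --- that one step gives the claimed implication directly and in the right generality. To repair your argument you would either cite Palmer's theorem as the paper does, or else upgrade the frozen spectral-gap conclusion to a genuine dichotomy by a roughness/slow-variation argument, which is additional work and requires hypotheses you have not stated.
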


\begin{proof} The first assertion follows by Palmer's theorem \cite{P}, the second because existence of
	symmetrizers yields resolvent bound \eqref{hfres}. It remains to prove the third, which is
	the main content of the proposition.
	Consider a system \eqref{gsys} possessing an exponential dichotomy with projectors
$P_{\pm}(x)$. Note that $\range(P_+)(x)$, $\range(P_-)(x)$ uniquely determined as manifolds 
of decaying/growing solutions, $j+k=n$.
Choose bases $\Span\{r_1^+,\dots, r_j^+\}$, $\Span\{r_1^-,\dots, r_k^-\}$ (nonunique),
and define the $n\times n$ coordinate transformation
\be\label{T}
T(x):=\bp r_1^+& \dots & r_j^+&r_1^-& \dots & r_k^-\ep(x) \, .
\ee

Then, making the change of variables $w=Tz$, we obtain a system
\be\label{sys}
z'=\Lambda(x) z,
\qquad \Lambda:=T^{-1}GT- T^{-1}T',
\ee
where the matrix $\Lambda $ is block-diagonal, $\Lambda=\blockdiag\{\Lambda_+, \Lambda_-\}$.
This yields the pair of decoupled systems
\be\label{dec}
z_+'= \Lambda_+z_+, \qquad z_-'= \Lambda_-z_-, 
\ee
where, by the exponential dichotomy property, $z_+$ is exponentially stable in forward direction
and $z_-$ in backward direction.

Using the classical device of Lyapunov, we may thus define quadratic forms $Q_\pm(x)$ and associated norms
$\|\cdot \|_{Q_\pm(x)}$ by 
\be\label{Qdef}
Q(x) := \int_x^{\pm\infty} (\cS^T(x,y) \cS(x,y))_\pm dy,
\ee
where $\cS_\pm (x,y)$ are the respective solution operators for systems \eqref{dec}(i)-(ii), obtaining
convergence by exponential decay of $\cS_\pm(x, \cdot)$ in forward/backward directions.
Noting, for a solution $z+(y)= \cS_+ x(y-x)z_+ x(x)$ of \eqref{dec}(i), that
\ba\label{altquad}
\langle z_+(x),Q_+(x)z_+(x)\rangle &=\int_x^{+\infty} \langle z_+(x),\cS_+^T(y-x)\cS_+(y-x)z_+(x)\rangle dy\\
&=
\int_x^{+\infty} |z_+(y)|^2 dy,
\ea
we find that
\be\label{contract}
(d/dx) \langle z_+(x),Q_+(x)z_+(x)\rangle = (d/dx)\int_x^{+\infty} |z_+(y)|^2 dy= -|z_+(x)|^2.
\ee

Finally, defining 
\be\label{dichsymm}
S(x):= \Big( T^{-1,T}\bp -Q_+&0\\0&  Q_- \ep T\Big(x),
\ee
we find that 
\be\label{lbd}
(d/dx)(1/2)\langle w,Sw\rangle(x)\rangle \geq |v(x)|^2\geq \theta |w(x)|^2, \qquad \theta>0.
\ee
But, also
\ba\label{equality}
	(d/dx)(1/2)\langle w,Sw\rangle(x)\rangle &= \Re \langle w, S(dw/dx)+ (1/2)(dS/dx)w\rangle \\
	&= \langle w, (\Re(SG)+ (1/2)(dS/dx)) w)\rangle,
\ea
whence, combining \eqref{lbd} and \eqref{equality}, we obtain \eqref{reSG0}, verifying the claim.

\br\label{semirmk}
Note that the construction of \eqref{Qdef}-\eqref{contract} is similar to the proof that
a stable $C_0$ semigroup is contractive in an appropriately defined norm \cite{Pa}.
\er
\end{proof} 

Proposition \ref{lyapprop} points out a heuristic connection between resolvent condition \eqref{hfres}
and existence of symmetrizers. At a more useful level, it shows to be {\it sufficient}
for finding symmetrizers the more familiar problem of finding smooth dichotomies, 
which may well be possible by linear tools such as WKB expansion.
We demonstrate below that existence of a uniform smooth symmetrizer in some cases {\it does not}
imply existence of a uniform smooth exponential dichotomy.
In such cases, one must rather look directly for symmetrizers; the evident nonuniqueness of both 
dichotomies and symmetrizers making this both possible and technically involved.

In either case, a WKB-type expansion appears to be a useful first step, analogous to Kreiss' spectral 
decomposition step in construction of classical symmetrizers for constant-coefficient problems.
However, we emphasize that this is indeed, only a first step, and in Kreiss' multi-D analysis it is 
the second step of smoothly choosing symmetrizers that is the main technical difficulty,
with ``glancing points'', or bifurcation from nontrivial Jordan blocks being the main obstruction.
We point to \cite{GMWZ5} and \cite{MZ5} in the constant-coefficient case
for examples in which smooth construction seems to be a major (and still open) obstacle.

\subsubsection{The discontinuous case}\label{s:discdich}
Similarly as in the analogous case of a spectral gap 
in the constant-coefficient case, it is straightforward to extend the
``interior symmetrizer construction'' just performed to one achieving 
also  boundary coercivity, \eqref{clS0}, {\it at least in the case of
a profile containing a single ``subshock'' discontinuity}, 
without loss of generality at the origin, $x_1=0$.

Recall \cite{K,M1,M2,Met3}, even for well-posedness/short-time existence
of the linearized equations, the subshock must be of Lax type, satisfying
the linearized Rankine-Hugoniot conditions
\be\label{RH}
\psi[\lambda F(\bar w) + \sum_{j=2}^d \xi_j F_j(\bar w)]+ [A_1 v]=0,
\ee
where $[\cdot]$ denotes jump across $x=0$ and $\psi$ the infinitesimal shock
location,
from which, projecting onto directions orthogonal to
$[\lambda F(\bar w) + \sum_{j=2}^d \xi_j F_j(\bar w)]$ to eliminate $\psi$, 
one obtains a rank-$(n-1) $ frequency-dependent boundary condition
$$
\Gamma(\lambda, \xi_2,\dots, \xi_d) v=0
$$
on the traces $(v(0^+),v(0^-))$.

By Palmer's theorem \cite{P}, we obtain existence of exponential 
dichotomies on $(-\infty,0)$ and $(0, +\infty)$. The linear 
high-frequency stability estimate \eqref{semigpestdisc}, specialized to 
the homogeneous case $f=0$, $g=0$, then gives $v\equiv 0$ for
$H^s$ solutions $v$ of \eqref{dpres}, from which we obtain the 
(variable-coefficient) {\it Kreiss-Lopatinsky condition}
\be\label{KL}\tag{KL}
\hbox{\rm $\Gamma$ is full rank on $(\Sigma_+(0^-),\Sigma_-(0^+))$},
\ee
where $\Sigma_\pm:= \Range P_\pm$, with $P_\pm$ the projectors onto
exponentially growing (decaying) modes in forward $x_1$.

Consulting to the block-diagonaling coordinates of \eqref{T}-\eqref{dec}, 
we have that \eqref{KL} is equivalent to $\Gamma$ full rank on the 
image under $T$ of the
coordinate blocks associated with $Q_+$ for $x<0$ and $Q_-$ for $x>0$.
Thus, redefining \eqref{dichsymm} as
\ba\label{dichsymmdisc}
S(x)&:= \Big( T^{-1,T}\bp -Q_+&0\\0&  C Q_- \ep T\Big(x),\quad x<0,\\
S(x)&:= \Big( T^{-1,T}\bp C Q_+&0\\0&  - Q_- \ep T\Big(x),\quad x>0
\ea
with $C>0$ sufficiently large, we obtain \eqref{clS0},
exactly as in the treatment by Kreiss of the constant coefficient case
with spectral gap. 
(See first part of proof of Main Theorem 4 \cite[top, p. 292]{K}.)

Combining, we have the following discontinuous version of 
Proposition \ref{lyapprop}.

\begin{proposition}[Discontinuous case]\label{lyappropdisc}
For profiles with a single discontinuity at $x=0$,
existence of pointwise exponential dichotomies on $(-\infty,0)$ and $(0,+\infty)$
together with the Kreiss-Lopatinsky condition are necessary for
the high-frequency resolvent bound \eqref{hfres}, which is necessary for 
existence of symmetrizers.  On the other hand, existence of (globally) 
uniformly smooth and bounded 
%TODO: why global? what does this mean??? I think we really mean globally 
%unif bdd and smooth... ??? clarify!
	%global 
exponential dichotomies on on $(-\infty,0)$ and $(0,+\infty)$
together with the Kreiss-Lopatinsky condition is
	sufficient for existence of symmetrizers.
\end{proposition}

\subsubsection{The case of multiple discontinuities}\label{s:multdiscdich}
We mention in passing the situation of a steady profile with multiple
Lax-type subshock discontinuities, which cannot happen in the constant-coefficient 
case, but {\it can} happen for relaxation profiles \cite{JNRYZ,FRYZ}.
For instance, let us consider the case of two shocks, at $x_1$ and $x_2>x_1$.
In this case, the Kreiss-Lopatinski condition \eqref{KL} is replaced
by a more complicated rank condition involving the reduced Rankine-Hugoniot
conditions $\Gamma_j$ for both shocks, $j=1,2$, and the values of $v$
on the growing manifold to the left of $x_1$, the decaying manifold
to the right of $x_2$.
This requires a more detailed computation than the one for single shocks,
involving also dynamics on the intermediate zone between the shocks.
We return to this issue in the more concrete setting of Section \ref{s:onetwo},
in which these intermediate zone dynamics may be diagonalized.
%See for example the construction of classical damping estimates in \cite{RZ2}.

\section{The large-amplitude 1-D noncharacteristic hyperbolic case}\label{s:1D}
To examine further the relation between high-frequency spectral stability, existence of symmetrizers, and
exponential dichotomies, it is useful to start with a simple case in which all three coincide,
linked by the familiar tools of WKB expansion and block-diagonalization in the semiclassical limit.

Restrict now to the 1-D case 
\be\label{1blaw}
\d_t f^0(w) + \d_{x} f^1(w)=r(w),
\ee
under our standard assumptions \eqref{A1}-\eqref{A2} of noncharacteristic 
hyperbolicity, together with geometric regularity, \eqref{A3'}.

Consider the stability of a stationary shock profile $\bar w(x)$ of arbitrary amplitude connecting
endstates $w_\pm$ with the standard decay rates 
\be\label{stddecay}
\hbox{\rm
$|\d_x^k(\bar w- w_\pm)|\leq Ce^{-\theta |x|}$ for $x\gtrless 0$ and $0\leq k\leq K$
}
\ee
following, e.g., from nondegeneracy of rest points $w_\pm$ as equilibria of the associated 
standing-wave ODE.
Then, we have the following definitive result, reducing 
pseudodifferential damping for large-amplitude 1-D waves
to high-frequency stability of the linearized dispersion relations about $w_\pm$.

\begin{theorem}[Smooth case]\label{1dwkb}
For smooth profiles in 1D,
%In one space dimension, 
assuming \eqref{A1}--\eqref{A3} near endstates $w_\pm$
and \eqref{A1}--\eqref{A2}, \eqref{A3'} near points on the orbit of 
$\bar w(\cdot)$, high-frequency resolvent estimate \eqref{hfres}, 
pseudodifferential damping \eqref{pdamp}, existence of smooth symmetrizers, 
and existence of uniform smooth exponential dichotomies are all equivalent, 
and, moreover, are equivalent to
	\be\label{cond1}
	\beta_j:=\Big( \ell_j^T (A^0)^{-1} E r_j \Big) (w_\pm)\leq -\theta<0 
	\ee
	for simple eigenvalues $a_j$ of $(A^0)^{-1}A$, with 
	left and right eigenvectors $\ell_j$, $r_j$, $\ell_j^T r_j=1$, and
\be\label{cond1mult}
\sigma \Big( L_j^T (A^0)^{-1} E R_j \Big) (w_\pm)\leq -\theta<0 
\ee
for multiple eigenvalues,
with left and right eigenvector blocks $L_j$, $R_j$, $L_j^TR_j=\Id_{m\times m}$.
\end{theorem}

That is, whenever there hold {\it linear high-frequency} resolvent bounds, as are typically
used to obtain linear decay estimates- typically with loss of derivatives, one has automatically 
{\it nonlinear high-frequency damping estimates} \eqref{pdamp} sufficient to close a nonlinear iteration
and establish nonlinear stability/estimates as well, with {no additional work required.}
Moreover, the verification of high-frequency resolvent bounds \eqref{hfres} is also
straightforward, reducing to the linear algebraic conditions 
\eqref{cond1}-\eqref{cond1mult}.
The latter may be recognized as the high-frequency dissipation conditions
of Kawashima \cite{Ka,Ze}, equivalent to uniform linearized high-frequency 
stability estimates for constant solutions $w\equiv w_\pm$;
for further discussion/extensions, see \cite{FS21} and references therein.

\begin{proof} 
For simplicity, we restrict to the case of simple eigenvalues at $w_\pm$,
the case of multiple eigenvalues being essentially identical.
Uniform smooth exponential dichotomies imply uniform resolvent bounds \eqref{hfres}. 
by variation of constants. We will show under the assumptions \eqref{A1}-\eqref{A3}
that condition \eqref{cond1} implies existence of uniform smooth exponential 
dichotomies, and that uniform resolvent bounds \eqref{hfres} imply \eqref{cond1},
whence the result follows by Propositions \ref{nlprop} and \ref{lyapprop}.

We focus on the most difficult case that $|\Im \lambda|\gg 1$ and $-1/C\leq \Re \lambda\leq C$
for $C$ sufficiently large. The case $\Re \lambda \gg 1$ is similar but straightforward.
Denoting $\lambda= \gamma \pm i/\eps$, $0<\eps\ll 1$, without loss of generality $\lambda=\gamma+i/\eps$,
we express the eigenvalue equation $\lambda A_0 v + A_1\partial_x v-Ev=0$ in semiclassical limit form
\be\label{sc}
	\eps \partial_x v' +  \Big( i A_1^{-1}A_0 + \eps  A_1^{-1}(  E-\gamma A_0) \Big) v =  0,
\ee
or equivalently
\be\label{scequiv}
	\eps \partial_x v' +  \Big( i A_1^{-1}A_0 + \eps  (A_0^{-1}A_1)^{-1}(  A_0^{-1}E-\gamma ) \Big) v =  0,
\ee
preparatory for WKB-type expansion as in \cite{MaZ1,Z4,LWZ,LWZ2}.
Here, $A_j$ and $E$ are functions of $\bar w+ v_0$, $v_0$ small of order $\delta\ll 1$.

By \eqref{A3'}, we have existence of smooth (hence uniformly bounded on
the bounded sets under consideration) eigenvalue and right/left
eigenvector pairs $\alpha_j$, $\ell_j$, $r_j$ for $A_0^{-1}A_1$, $\ell_j^Tr_j\equiv 1$.
Defining $L=(\ell_1^T, \dots, \ell_n^T)^T$ and $R=(r_1,\dots,r_n)$, $LR=\Id$,
and setting $v=Rz$, we may convert \eqref{sc} to the approximately diagonalized form
\be\label{zsc}
	\eps \partial_x z' + \Big(M_0 + \eps (M_1 + N) \Big)z= 0,
\ee
where 
\be\label{M0}
M_0=i \, \diag \{ \alpha_j\}(\bar w),
\ee
\be\label{M1}
	M_1= -\diag \{ \alpha_j^{-1}(\beta_j-\gamma) + O(\delta +  e^{-\theta |x|})\}(\bar w), \ee
and $N= O( \delta  +  e^{-\theta |x|})$ vanishes on its diagonal.
As $M_0 + \eps M_1$ has diagonal entries separated by distance $\sim 1$ near
the endstates $w_\pm$, we may by a further coordinate transformation $\Id+O(\eps)$
reduce the off-diagonal term $N$ to $O(\eps)$ in the limit as
	$x\to\pm \infty$ (see, e.g. \cite{MaZ1})
while leaving the form of $M_0$ and $M_1$ unchanged.

By the assumed convergence \eqref{stddecay}, coefficients
	$\alpha_j^{-1}(\beta_j-\gamma) (\bar w+v_0) $
converge exponentially to an order $|v_0|$ perturbation of their values at $\pm \infty$, 
hence condition \eqref{cond1}
gives for $\gamma\geq -1/C<0$, and some uniform $\theta>0$, $C>0$,
\be\label{uni}
-|\alpha_j^{-1}|(\beta_j-\gamma) \geq \theta + Ce^{-\theta |x|},
\ee
giving exponential dichotomies on the diagonal part of the homogeneous equation
$$
\eps \partial_x z' + \Big(M_0 + \eps M_1  \Big)z= 0
$$
by simple exponentiation of individual (scalar) coordinates, with uniform growth/decay bounds
$Ce^{-\theta |x-y|}$.
	(Here, we have crucially used assumption \eqref{A1}, guaranteeing that the 
sign of eigenvalues $\alpha_j$ do not change as $x$ traverses from 
	$-\infty$ to $+\infty$, in concluding from \eqref{uni} the existence
	of exponential dichotomies on the line.)

Rewriting as
\be\label{pertb}
\partial_x z' + \Big(\eps^{-1} M_0 +  M_1 +O(\eps)  \Big)z= 0,
\ee
we may thus appeal to the tracking/reduction lemma of \cite[\S 3.2]{MaZ5} to conclude
the existence of an $\Id + O(\eps)$ coordinate change converting \eqref{pertb} to
exact diagonal form
\be\label{zsdiag}
\partial_x z' + \Big(\eps^{-1} M_0 +  M_1 +\eps M_2  \Big)z= 0
\ee
with $M_2=O(\eps)$ diagonal, yielding again an exponential dichotomy.

Thus, condition \ref{cond1} implies existence of uniform exponential dichotomies for 
$|\Im \lambda|\gg 1$ and $-1/C\leq \Re \lambda\leq C$, $C\gg 1$.
The case $\Re \lambda \gg 1$ may be treated similarly, redefining $\eps=1/|\lambda|$ and considering
only the principal, order one term $-A_1^{-1}A_0( \lambda/|\lambda|) v$, which has a uniform
spectral gap for $\Re \lambda$ sufficiently large, yielding an ``super''-exponential dichotomy with 
rate $e^{-\theta |x|/\eps}$. See, for example the similar analysis in \cite{Z4}.

As all steps in this construction were smooth, this confirms that \eqref{cond1} implies 
existence of smooth uniform exponential dichotomies, completing the second step of the proof.
For the third and final step of the proof, showing that high-frequency resolvent bounds
\eqref{hfres} imply \eqref{cond1}, we have only to note by standard cutoff arguments
that a necessary condition for uniform high-frequency resolvent bounds for the linearized equations about
the profile is the satisfaction of corresponding high-frequency resolvent bounds for the linearized
equations about the constant solutions $w\equiv w_\pm$ at its endstates.
But, these amount to uniform bounds on the inverse of the Fourier-Laplace symbols
$$
(A_0 \lambda + ik A_1 -E)_\pm
$$
for $k\in \R$, $\Re \lambda\geq -1/C$ and $|\lambda|$, $C$ sufficiently large, 
for which a necessary condition is
\be\label{necstep}
|\lambda +  \sigma A_0^{-1}(ik A_1 -E)_\pm| \geq \theta>0.
\ee
Taking $|k|$ sufficiently large, we find by standard matrix perturbation theory \cite{Kat} that 
$$
\sigma A_0^{-1}(ik A_1 -E)= ik \alpha_j + \beta_j + O(|k|^{-1},
$$
hence, taking $\Im \lambda= ik\alpha_j$ in \eqref{necstep}, we obtain 
$|\Re \lambda+ \beta_j|\geq \theta$ for $\Re \lambda \geq -\theta$, giving $\beta_j\geq \theta$
as claimed.
\end{proof}

Theorem \ref{1dwkb} both simplifies and significantly 
generalizes previous results \cite{MaZ1} and \cite{MaZ2} 
for large-amplitude smooth relaxation waves, which besides dissipativity,
\eqref{cond1}-\eqref{cond1mult}, required, respectively, semilinearity and 
symmetrizability of system \eqref{blaw}.

\br\label{ncrmk}
The ingredient behind the very simple form of the result is the noncharacteristicity condition
\eqref{A1}, which insures that the characteristics $\alpha_j$ do not change sign as we follow an
eigenmode from $-\infty$ to $+\infty$, together with the fact that the sign of spatial decay modes
$\beta_j/\alpha_j$ is determined by the sign of $\alpha_j$.
For more general WKB-type analyses, one typically requires an Evans function condition encoding
uniform transversality of left unstable and right stable manifolds of the eigenvalue equation at $x=0$.
When \eqref{A1} fails, the resolvent ODE becomes singular, and the problem becomes
considerably more subtle, requiring the introduction of additional quite different tools.
See, e.g., \cite{DR22,GR25} in the scalar case, or \cite{RZ2} in the vector
case with singularity of rank one.
\er

\subsection{The discontinuous case}\label{s:disc1D}
For a profile with a single discontinuity at $x=0$,
we obtain by essentially the same construction of exponential dichotomies
as in the proof of Proposition \eqref{1dwkb}, now on half-lines
$(-\infty,0)$ and $(0,+\infty)$, together with Proposition \ref{lyappropdisc}, 
an equally decisive result.

\begin{corollary}[Discontinuous case]\label{1dwkbdisc}
	For profiles in 1D with a single discontinuity (without loss of
	generality at $x=0$),
assuming \eqref{A1}--\eqref{A3} near endstates $w_\pm$
and \eqref{A1}--\eqref{A2}, \eqref{A3'} near points on the orbit of 
$\bar w(\cdot)$, high-frequency resolvent estimate \eqref{hfres}, 
pseudodifferential damping \eqref{pdamp}, 
and existence of smooth symmetrizers are all equivalent, and, moreover
are equivalent to high-frequency dissipativity conditions \eqref{cond1} 
(resp. \eqref{cond1mult} in the case of multiple eigenvalues)
at the endstates $w_\pm$ together with the Kreiss-Lopatinsky condition
	\eqref{KL}.
	%NOTE: on subshock alone... in sense of Majda.
\end{corollary}

\subsection{Multiple subshocks}\label{s:onetwo}
In the 1-D strictly hyperbolic case we are considering,
for which we may reduce as above to diagonal form, one may
for low-dimensional systems carry out a classical damping
estimate for solutions with two subshocks by the methods used in 
\cite{RZ2}, whenever there holds the high-frequency resolvent estimate
\eqref{hfres}.
For $2\times 2$ systems with two subshocks, the 
same technique can be adapted to yield symmetrizers, and therefore
pseudodifferential damping, whenever there holds \eqref{hfres}.
We demonstrate here the essential part of the argument by treating
the case of a bounded interval $x\in [0,L]$ and diagonal system
\ba\label{diagsys}
u^+_t + a_+ u^+_x= -b^+u^+,\\
u^-_t + a_- u^-_x= -b^-u^-,
\ea
$a^\pm \gtrless 0$, $b^\pm>0$, with boundary conditions
\ba\label{intBC}
u^-(0)&=c^- u^+(0),\\
u^+(L)&=c^+ u^-(L).\\
\ea

This yields eigenvalue equations
\be\label{diageig}
(u^\pm)_x= g^\pm u^\pm, \qquad g^\pm= -(a^+)^{-1}(\lambda + b^+),
\ee
with boundary conditions \eqref{intBC}, for which \eqref{hfres}
implies the Evans-Lopatinsky condition of
nonexistence of solutions for $\Re \lambda \geq 0$ and $|\lambda|$
sufficiently large.
By explicit exponentiation of the eigenvalue problem \eqref{diageig}
and examination of \eqref{intBC}, one finds as in \cite{RZ2} that 
this amounts to 
\be\label{EL}
|c^+c^-|< |v^+(L)|/|v^-(L)|,
\ee
where $v^\pm$ are solutions of \eqref{diageig} with $\lambda=0$,
initialized as $v^\pm(0)=1$.

{\bf Alternative symmetrizer construction.}
In this diagonal setting, we may construct a symmetrizer in a more precise
and simple way, following the idea of \cite{RZ2}, as $S=\diag\{s^+,s^-\}$,
with $s^\pm$ defined by
\be\label{sdef}
2\Re s^\pm g^\pm  + s^\pm _x= \eps,
\ee
giving in the zero-interior dissipation limit $\eps\to 0$ solutons
of 
$$
s^\pm _x= -2\Re s^\pm g^\pm = -2(b^\pm/a^\pm) \, s^\pm,
$$
hence
\be\label{ssoln}
s^\pm= |d^\pm/ v^\pm|^2,
\ee
with $d^\pm$ constant, without loss of generality $d^-=1$, $d^+=d$.

It remains to show strict boundary dissipativity for this $\eps\to 0$ limit,
from which we obtain boundary dissipativity by continuity also for
$\eps>0$ sufficiently small, completing the proof.
It is straightforward to see that boundary dissipativity \eqref{clS0} is equivalent
to {\it maximum dissipativity}: $S(0)$ positive definite on $\kernel \Gamma(0)$
and $S(L)$ negative definite on $\kernel \Gamma(L)$, where 
$$
\Gamma(0)=(-c^-, 1)^T, \quad \Gamma(L)= (1,-c^+)^T,
$$
or
$$
\kernel \Gamma(0)=(1, c^-)^T, \quad \Gamma(L)= (c^+,1)^T,
$$

Taking the neutral limit 
\be\label{neutral}
\Gamma(0)^T S(0) \Gamma(0)=0,
\ee
we find by a similar continuity argument that it is sufficient that
$\Gamma(L)^T S(L) \Gamma(L)<0$,
or 
\be\label{finalcond}
|c_+ d/v^+|^2 - |1/v^-|^2 <0.
\ee
Equality \eqref{neutral} gives $d=c^-$, whence \eqref{finalcond} becomes
$ |c^+ c_-|< |v_+/v_-|$, in agreement with \eqref{EL}.

This computation readily extends to the two-subshock case on the
whole line. More important for us, {\it it will give a useful model
for a key multi-D computation in Section \ref{s:E1} later on.}

\br\label{finitermk}
For the above symmetrizer construction, it is necessary that both
the length of the domain and the real part of $g^\pm$ coefficients
be bounded, as otherwise $\max \{1/|v^\pm|\}$ would grow without bound,
even in the constant-coefficient case.
By contrast, the ``Lyapunov-type'' symmetrizers constructed by the method of
Section \ref{s:partcon} remain always bounded, in the constant-coefficient
case reducing to constants.
The extension to a pair of subshocks at $x_1<x_2$ within a profile
defined on the whole real line may be accomplished
by combining the present construction on the intermediate zone $[x_1,x_2]$
between the shocks with Lyapunov-type constructions on $(-\infty,x_1)$
and $[x_2, +\infty)$, with a matching condition at the boundary points $x_1,x_2$.
Treatment of three or more shocks is an interesting linear algebraic problem.
\er

\section{The multi-D case: Kreiss' glancing vs. Airy-type turning points}\label{s:multi-d_ver}
In the previous sections, we have shown by different means for small-amplitude multi-D waves
and for large-amplitude 1-D waves that pseudodifferential damping is essentially equivalent to dissipativity, 
or spectral stability of the constant solutions $w\equiv w_\pm$ at the endstates $w_\pm$
(at least in the setting of smooth profiles to which we have restricted here).
However, as we now show, the situation is quite different in the large-amplitude multi-D case.
In particular, {\it existence of Kreiss symmetrizers is in this setting distinct from,
and more general than, existence of exponential dichotomies, which, moreover, 
typically does not occur.}
Indeed, though they do exist in the multi-D scalar case, they appear generically not to exist in general, even for $2\times 2$ systems, the simplest vectorial case.
%TODO: since already observed by Kreiss for const coeff case... explained below...

\subsection{Kreiss' glancing}\label{s:Kglancing}
The distinction between symmetrizers and dichotomies
may be seen most easily in the constant-coefficient $E=0$ case
\be\label{Kcase}
\partial_x v= G(\eta, \lambda)v=f, \qquad 
G(\eta, \lambda):= -A_1^{-1}( \lambda A_0 + \sum_{j=2}^{d}A_j i\eta), 
\ee
treated in the classical analysis of Kreiss \cite{K,Met3}.
When the matrix $G$, rescaled by $|\lambda, \eta|$ to bounded size, has a spectral
gap, then an analytic dichotomy (with respect to frequences $\eta$, $\lambda$) 
trivially occurs, yielding smooth symmetrizers by Lyapunov's Lemma.
Construction of uniform smooth symmetrizers
comes down, therefore, to treatment of the limit of vanishing spectral gap, i.e. 
the case of spatial eigenvalues $\mu$ of $G$ with real part approaching zero.
Isolated such eigenvalues are readily dealt with by spectral projection, reducing to the scalar case
\cite{K,Met3}, the essential difficulty lying in {\it glancing points}, or limiting frequencies
at which multiple eigenvalues $\mu_j$ collide at an imaginary value, corresponding under strict
hyperbolicity assumptions to a Jordan block of $G$ of order $s\geq 2$.
For general systems $n\geq 2$ and dimensions $d\geq 2$, {\it such glancing points generically occur}.

For illustration, consider the simplest case $s=2$,\footnote{The only one occurring for ideal gas dynamics
or shallow-water flow \cite{YZ2}.} in dimension $d=2$, for which generically there is
a local smooth curve of glancing points $\lambda=i\tau(\eta)$.
Performing spectral projection isolating the $2\times 2$ Jordan block, 
defining the perturbed frequency $\tilde \lambda:= (\lambda-i\tau(\eta))=\tilde \gamma + i\tilde \tau$, 
and dropping higher order terms yields the model problem 
\be\label{GK}
\hbox{\rm $ w'-G(\lambda)w=f$, with $G(\lambda)=\bp 0 & i/\eps \\ \tilde \gamma + i\tilde \tau & 0\ep$,}
\ee
in the high-frequency limit $\eta= 1/\eps$, $\eps \to 0$, where
\be\label{rest}
\hbox{ $\tilde \tau\in \R$, $\tilde \gamma\in \R^+$, and $|\tilde \lambda| = O(1)$.}
\ee

For this simple problem, a symmetrizer is given by
\be\label{gsymm}
S= \bp 0 & 1+i\eps\gamma \\1-i\eps\gamma & 0\ep,
\ee
for which 
$$
\Re SG= \Re \bp 0 & 1+i\eps\tilde\gamma\\1-i\eps\tilde\gamma &0\ep 
\bp 0 & i/\eps\\ \tilde \gamma + i\tilde \tau& 0\ep= 
\bp \tilde\gamma (1-\eps\tilde\tau) &0\\0&\tilde\gamma \ep \geq \tilde\gamma/2
$$
for $\tilde \gamma$ bounded and  $\eps$ sufficiently small.
For the general case accounting for neglected terms, see, e.g., \cite[\S 4]{K},
\cite{Met3}, \cite[Appendix A]{MZ4}, or \cite[\S 5]{GMWZ6}.

On the other hand \eqref{GK} 
{\it does not possess a smooth exponential dichotomy}, as the
eigenvalues of $G$ split as $\sqrt{\tilde \lambda}$ as $\tilde \lambda \to 0$,
with associated stable/unstable projectors blowing up as $1/\sqrt{\tilde \lambda}$.

\br\label{wavermk}
One may verify, for example, that model problem \eqref{GK} is exactly the scenario that occurs
for the 1-D wave equation
\be\label{1dwv}
A_0=\Id, \quad A_1=\bp 1& 0\\0&-1\ep, \quad A_2=\bp 0&1\\1&0\ep
\ee
near the glancing curve $\lambda_*(\eta)=\pm i\eta$, setting
$ \tilde \gamma + i\tilde \tau:= \lambda - \lambda_* $,
changing coordinates to Jordan form,
and dropping terms or irrelevant order in $\tilde \gamma$, $\tilde \tau$; see, e.g., \cite{Z2,Z3}.
\er

\subsection{Variable coefficients and Airy-type turning points}\label{s:varcoeff}
In the variable-coefficient case, the phenomenon of glancing takes a more complicated form, bringing to the 
forefront ideas introduced in an early {\it turning-point} analysis on detonation stability \cite{Er4} 
of combustion pioneer J.J. Erpenbeck, revisited/retranslated in \cite{LWZ,LWZ2,Z9}.
Namely, whereas Jordan blocks of $G$ occur in the constant-coefficient case for fixed $\eta$ and background
state $w$ and special imaginary values $\lambda=i\tau(\eta,w)$, corresponding to glancing points, 
they occur in the variable-coefficient case on a surface $x_1=x_*(\eta, i\tau)$ for {\it each} 
nearby $\eta$, $\tau$.
In the high-frequency limit, block diagonalization as in \cite{Wa,LWZ2}, corresponding
to lowest order with frozen-coefficient spectral projection, change of coordinates in
$x_1$ and $(\lambda,\xi)$ as in \cite{LWZ2}, and discarding of higher-order terms, 
yields in a neighborhood of the turning point the model semiclassical limit problem
\be\label{turneq}
dw/dx-G(\lambda, x)w=f, \qquad G(\lambda, x)= \bp 0 & i/\eps\\ \tilde \gamma
+ ia(x)/\eps & 0 \ep,
\qquad \eps:=1/|\eta|\to 0
\ee
analogous to \eqref{GK}, where $\tilde \gamma$ and $a$ are bounded, 
$\tilde \gamma \in \R^+$, $a\in \R$, $x$ measures distance from $x_*$, and
\be\label{aprop}
\hbox{\rm
$a(0)=0$, with $a(x)\gtrless 0)>0$ for $x\gtrless 0$.}
\ee

%NOTE: \tilde \tau has been absorbed in shift of $a$, $x_*$... 
%mention somewhere here.

For $a'(0)=1$, the point $x=0$ in \eqref{turneq}, corresponding to the 
special point $x_*$ in the original equations, under the assumptions \eqref{aprop}
can be recognized as an ``Airy-type'' {\it turning point;} cf. \cite{Wa,LWZ}. 
That is, {glancing points in $\lambda$ are replaced for variable coefficients 
by turning points in $x$.}
Note in \eqref{turneq} that $\Im\lambda$ has been absorbed 
in shifts of $a$, $x$, hence $\tilde \tau$ in \eqref{GK} effectively replaced
by $a(x)/\eps$.
An important difference from the constant-coefficient case is that
$ia(x)/\eps$ is more singular than the former term $\tau$, requiring
additional care in the symmetrizer construction.
In particular, the classical construction of Kreiss can
be expected a priori to succeed only in a small neighborhood of $x=0$, where
$|a(x)|/\eps$ is small compared to $1/\eps$.
(The setting of Kreiss, translated to our coordinates, is
$\tau= O(1)$; what is needed for his arguments
is $\tau \ll 1/\eps$ \cite[\S 4]{K}.)

\subsection{Symmetrizers vs. dichotomies I}\label{s:svdI}
The model problem \eqref{turneq} has an evident {\it constant} symmetrizer
\eqref{gsymm} on intervals $x\in [-\delta, \delta]$ for which $|a|\leq 1/2$
giving in this case
\be\label{easysym}
\Re SG= \Re \bp 0 & 1+i\eps\tilde\gamma\\1-i\eps\tilde\gamma &0\ep 
\bp 0 & i/\eps\\ \tilde \gamma + ia(x)/\eps & 0\ep= 
\bp \tilde\gamma (1-a(x)) &0\\0&\tilde\gamma \ep \geq \tilde\gamma/2.
\ee
Indeed, {\it \eqref{easysym} holds not only on $[-\delta, \delta]$,
but on all of $[-\infty, \delta]$} (where $a< 1/2$).

On the other hand, we have the following result indicating
the distinction between smooth dichotomies and symmetrizers in multi-D.

\begin{proposition}\label{nodice}
For the model problem \eqref{turneq}, with $a'(0)=1$ (Airy type), 
there does not exist a smooth family
of exponential dichotomies on $x\in [-\delta,\delta]$, 
uniform as $\tilde \gamma \to 0$, for any $\delta>0$.
\end{proposition}

\begin{proof}
Repeated diagonalization using the tracking/reduction lemma as described in the
proof of Theorem \ref{1dwkb} yields that the flow on
$[-\delta, -C\eps^{2/3}]$ and $[C\eps^{2/3}, \delta]$
for appropriate choice of $C$ is well approximated by the WKB approximation.
Namely, we find that separation of eigenvalues 
\be\label{mudef}
\mu_j= \pm i \eps^{-1}\sqrt{a-i\eps \tilde \gamma} 
\sim i \sqrt{a}/\eps, 
\ee
satisfying
$ |\mu_j| \sim \sqrt{|x|}/\eps$ for $x$ small,
dominates diagonalization error $LR_x\sim a_x/a\sim 1/\sqrt{|x|}$ for
$x$ small and $\eps\ll x$, where
\be\label{LRdef}
R=\bp 1 & 1\\ c& -c \ep
\qquad
	L=R^{-1}=(1/2)\bp 1& 1/c\\ 1 &- 1/c \ep, 
\ee
are left and right diagonalizing transformations for $G$,
with
\be\label{cj}
	c:=(\eps/i)\mu_1= \sqrt{a-i\eps \tilde \gamma} \sim \sqrt{a}.
\ee
Namely, $(error/separation) \sim \eps/|x|$ is small of order $\eps^{1/3}$
under our assumption $\eps^{2/3}\leq |x|\leq \delta$
implying by the repeated diagonalization method of
Levinson, as implemented in  \cite{MaZ1}, that by a further 
coordinate change $\Id + O(\eps^{1/3})$ the diagonalization error may be
reduced to any desired power of $\eps^{1/3}$, in particular much less than 
the order one spectral gap on the side $a(x)>0$.
But this then implies, by the tracking/reduction lemma, that there exists an
	exactly diagonalizing transformation of form $\Id + O(\eps^{1/3})$,
on which the diagonal coordinates experience exponential growth/decay rates 
of order 
\be\label{order}
\Re \mu \sim \pm \Re \sqrt{\tilde \gamma i -x/\eps^2},
\ee
which, for $x\in [-\delta, -C\eps^{2/3}]$, is approximately
$$
\pm \Re \sqrt{|x|/\eps^2}\gtrsim C\eps^{1/3}/\eps= C\eps^{-2/3}\to \infty
$$
as $\eps\to 0$.
Thus, on the $a<0$ side, which, for $x\in [-\delta, -C\eps^{2/3}]$, 
the growing and decaying projectors of any exponential dichotomy must be 
exactly the projections onto diagonal components, else there would be a 
contradiction.

On the other hand, the diagonalizing projectors, in original coordinates,
blow up as $1/\sqrt{|x|}$, which, at $x=C\eps^{2/3}$ is order
$\eps^{-1/3} \to \infty$ as $\eps\to 0$, contradicting
the possibility of a smooth dichotomy (since this would imply
smooth, hence bounded, projectors).
\end{proof}

This shows that {\it existence of smooth symmetrizers and existence of smooth
exponential dichotomies are distinct properties in multi-D}, at least on 
bounded intervals.

\br\label{easygaprmk}
On the $a<0$ side, the spectral gap is of the same order $\sqrt{x}/\eps$
as the spectral separation and so we may simply apply the 
tracking/reduction lemma immediately, without the need for preparation by
repeated diagonalization. The latter is mentioned only to show that the limits
of WKB on either side are similar. 
\er

\br\label{rescalermk}
The rescaling $\tilde x= x/\eps^{2/3}$ is the natural one for the Airy equation 
and its solutions \cite{O,O2,LWZ}, motivating our choice of $|x|\geq \eps^{2/3}$ 
in the result. Note that the growth rate $e^{\pm\eps^{-2/3} x}$ on the
$x<0$ side corresponds to exponential decay in these natural coordinates,
corresponding to the fact that the second-order Airy equation, written
as a first-order system in phase variables
$$
(w_1, \partial_{\tilde x}w_1)= (w_1, \eps^{-1/3} w_2) 
$$
\emph{does} possess an exponential dichotomy in these coordinates.
An alternative approach to the one followed here, below, might be to recover
exponential dichotomy by strategic coordinate changes in both dependent
and independent variables following this intuition.
\er

\subsection{Symmetrizers vs. dichotomies II}\label{s:svdII}
We complete this line of investigation by showing that 
existence of smooth symmetrizers and existence of smooth
exponential dichotomies are distinct properties in multi-D 
{\it also on the whole line}.
Note that Proposition \ref{nodice} already implies nonexistence of 
smooth exponential dichotomies for \eqref{turneq} on the line, 
as existence on the line is a stronger property than existence on an interval.
Thus, the following result is sufficient to prove our claim.

\begin{proposition}\label{yessymm}
Let $a(\cdot)$, $a'(\cdot)$ be uniformly bounded, with a single
zero satisfying \eqref{aprop}, and $a$ uniformly bounded from zero
outside $[-\delta, \delta]$, for some $\delta>0$.
Then, for the model problem \eqref{turneq}, there exists a smooth family
of symmetrizers $S(\tilde \gamma, x, \eps)$ 
on the whole line $x\in \R$ for $\tilde \gamma$ bounded 
and $\eps$ sufficiently small, satisfying for some uniform $\theta>0$,
\be\label{punch}
\Re (SG + S_x)\geq \theta \tilde \gamma>0.
\ee
\end{proposition}

\begin{proof}
We first construct a symmetrizer for $x$ on $[\delta, +\infty]$, for
a fixed $\delta>0$. Note that this is the side of the turning point $x=0$
on which the eigenvalues of the principal ($1/\eps$ order) part are pure 
imaginary, i.e., the ``oscillatory side,'' as compared to the ``exponential
side'' $x<0$ considered in the proof of Proposition \ref{nodice} above.
On this region, consider the symmetrizer 
\be\label{SWKB}
	S_{WKB}:=L^* \bp 1&0\\0&-1\ep L
\ee
suggested by ``frozen-coefficient'' diagonalization (WKB), 
with $L$ given by \eqref{mudef}-\eqref{LRdef}.  Direct computation
	(noting that $a>0$ on this region) yields 
\ba\label{Rsymm}
	S_{WKB}&=(1/2)\bp 0 & 1/c\\1/\bar c& 0\ep\\
	   &=
	  (1/2) \bp 0& 1/\sqrt{ a - i\eps \tilde \gamma}\\
	  1/\sqrt{ a+ i\eps \tilde \gamma}& 0\ep \\
	  & = (1/2\sqrt{a}) \bp 0 & 1+ i\eps \tilde \gamma/2a  \\
	   1- i\eps \tilde \gamma/a  &0\ep + O(\eps \tilde \gamma/2a)^2,
\ea
for $\eps \tilde \gamma/a$ sufficiently small.
Dropping the prefactor $1/2\sqrt{a}$ and the negligible $O(\tilde \eps)^2)$ term, 
we arrive at the simple form
\be\label{Sout}
S_{+}:= \bp 0 & 1+i\eps \tilde \gamma/2|a|\\ 1-i\eps\tilde \gamma/2|a|&0\ep
\ee
reminiscent of \eqref{gsymm}.
This satisfies $\partial_x S_{+}=O(\eps \tilde \gamma)$ for
$a$ uniformly bounded from zero, while
\be\label{S+bd}
	\Re (S_+G)=\bp \tilde \gamma/2 & 0\\0 & \tilde \gamma/2|a| \ep,
\ee
whence  
$$
	\Re (S_{+}G+\partial_x S_{-})\geq \tilde \gamma/2 + O(\eps \tilde \gamma) \geq \theta \tilde \gamma
$$
for $\eps \tilde \gamma$ sufficiently small and $a$ bounded from zero, 
verifying (by assumed boundedness of $\tilde \gamma$) that $S_{+}$ is indeed
a symmetrizer on $[\delta, + \infty]$ for $\eps $ sufficiently small.

For a symmetrizer $S_{-}$ on the lefthand side
$[-\infty, \delta]$, we may choose
\be\label{Sin}
	S_{-}:= \bp 0 & 1+i\eps \tilde \gamma/2a(\delta)|\\ 
	1-i\tilde \gamma/2 a(\delta) &0\ep
\ee
similarly as in \eqref{gsymm}; see the comment below \eqref{easysym}.
As $S_+$ and $S_-$ agree at their mutual boundary $x=\delta$, we have
by Remark \ref{pwrmk} that their concatenation
$$
S:= \begin{cases}
S_-, & x<0,\\
S_+, & x\geq 0
\end{cases}
$$
(since continuous and piecewise $C^1$) is a symmetrizer on the whole line.
\end{proof}

\section{A basic large-amplitude damping result in multi-D}\label{s:basic}
As mentioned above, the model equation \eqref{turneq} with $a'(0)\neq 0$ 
is a canonical form for an Airy-type turning point \cite{LWZ}.
And, indeed, the symmetrizer construction for \eqref{turneq} given
in Proposition \ref{yessymm} may be readily adapted to give a general
damping estimate for smooth, arbitrary-amplitude relaxation shocks in 2-D
under generic hypotheses relevant to applications: in particular to the
multi-D hydraulic shocks studied in \cite{YZ2}.

Let us start with the general setting
\be\label{varKcase}
\partial_x v= G(\eta, \lambda,x;v_0)v=f, \qquad 
G(\eta, \lambda,x):= -A_1^{-1}( \lambda A_0- E + A_2 i\eta), 
\ee
with coefficients $A_j$, $E$ depending on $\bar w(x)+ v_0$, with $v_0$ small in
$H^s$, $A_1$ uniformly invertible.
As in Section \ref{s:1D}, we focus on the critical case 
\ba\label{lamcond}
\lambda&=i/\eps + \gamma, \qquad -\theta\leq \gamma \leq C,\\
\eta &=i\eta_*/\eps + \eta, \qquad |\eta|\leq C,\\
\ea
converting \eqref{varKcase} to semiclassical limit form
\be\label{varKsemi}
G(\eta, \gamma,x;v_0, \eps,\eta_*)=
-\eps^{-1}i A_1^{-1}(  A_0 + A_2 (\eta_*+ \eps \eta)) -A_1^{-1}( \gamma A_0- E),
\ee
where all coefficients and constants are real.
Assuming high-frequency resolvent bound \eqref{hfres} for the associated
linearized problem, we seek to construct uniform smooth symmetrizers $S$,
$$
\Re(SG+\partial_xS)\geq \theta_2>0,
$$
thereby obtaining uniform resolvent bounds. (And, indeed, for purpose of a
damping estimate, we could fix once and for all $\gamma=-\theta$, though
this does not simplify much).
Noncritical frequencies $\lambda= (\gamma_*+ i)/\eps$ yield exponential
dichotomies by standard estimates, hence by Proposition \ref{lyapprop} 
may be ignored. 

Next, we assume that for each choice of critical frequency $\lambda$,
the eigenvalues of the principal part 
\be\label{prince}
-\eps^{-1}i A_1^{-1}(  A_0 + A_2 (\eta_*+ \eps \eta))
\ee
of $G$ split into groups of one or two eigenvalues, none of which intersect
the other groups as $x$ traverses the real line, and which cannot be split
further, i.e., the eigenvalues in a group of two cross either on the real line
or asymptotically as $x\to \pm \infty$.  Moreover, among the latter, we assume
that they cross precisely once, at an Airy-type turning point if somewhere on
the real line, or an asymptotic Jordan block if at $\pm \infty$.

This is the scenario that has been shown to hold in \cite{YZ2} for smooth
(or discontinuous) multi-D
hydraulic shock waves of the inviscid Saint Venant equations, a $3\times 3$
relaxation system of interest in hydraulic engineering.
A similar scenario was shown to hold for (discontinuous) 
ZND detonations in \cite{Er4,LWZ}.
Thus, it is a situation of physical relevance, though not the most general possible.

Using the tracking/reduction lemma as in the 1-D case, we may separate off each
of these groups and treat them individually.
Going a bit further, we notice that groups of a single eigenvalue fit the 1-D
scenario of Section \ref{s:1D}, hence admit exponential dichotomies yielding
symmetrizers by Proposition \ref{lyapprop}. 
These also may therefore be ignored, leaving us with the problem of
treating $2\times 2$ systems of form \eqref{varKsemi}: in effect, reducing
to the $2\times 2$ case.

We now note another, technically simplifying property that is satisfied
for both Saint Venant shocks and ZND detonations, namely,
that the upper righthand entry of the $2\times 2$
principal part \eqref{prince} of the reduced equation be nonvanishing on the 
whole line.  Next, we observe that we may assume without loss of generality that
the principal part be {\it traceless}, since we may subract off the trace
times the identity matrix without affecting either the nonvanishing
off-diagonal hypothesis or the existence of symmetrizers, since
$\Re (SG)= \Re (S(G-{\rm Trace}  G)$ follows from the fact that $G$ is pure imaginary
in each entry.
The following observation, proved by direct computation, shows then, that
a further change of coordinate yields principal part of form 
\be\label{redprince}
G_{principal}= (i/\eps) b \bp 0 & 1\\ a & 0\ep, 
\ee
with all matrix entries real: that is, essentially the scenario of
model problem \eqref{turneq}.
(Though the prefactor $b$ varies with $x$, this does not affect existence of
symmetrizers such as constructed in previous subsections, for which $S_x$
is a neglible error.)

\begin{lemma}\label{tracelem}
	For a traceless matrix $g=\bp d&b\\c&-d\ep$, the choice 
	$L=\bp 1 & 0\\ d/b & 1\ep$, $R=L^{-1}$
	yields
	\be\label{conc}
	LgR- L\partial_x R=\bp 0 & b\\ a & 0\ep,
	\qquad a:=d^2/b^2 - \partial_x (d/b)/b.
	\ee
\end{lemma}

Thus, in both of the two previously investigated cases (Saint Venant and ZND),
the coefficient $G$ of the reduced $2\times 2$ blocks of the
critical eigenvalue problem may be reduced to the 
form \eqref{turneq}, modulo an {\it order one real} summand, coming from the
(reduced, coordinate-changed version of) the secondary part
$-A_1^{-1}( \gamma A_0- E)$ of $G$.

More generally, we consider any system \eqref{varKsemi} for which
\eqref{varKsemi} may be decomposed (by any means)
into a collection of distinct single- and double-eigenvalue blocks of the
principal $1/\eps$ order part of $G$, 
%and $2\times 2$ blocks of form
with the latter of form
\be\label{2block}
G(x,\nu, \gamma, \eps)= (ib(x,\nu,\eps)/\eps)\bp 0 & 1) \\  a(x,\nu,\eps) & 0\ep
+ M(x,\gamma, \eps),
\ee
where all coefficients are real and depend smoothly on their arguments,
$b$ is uniformly bounded away from zero, and $a(\cdot, \nu,\gamma,\eps)$
vanishes either at precisely one point $x_*$ in $\R$ or asymptotically 
at $+\infty$ or $-\infty$ but not both, the former of Airy type 
($\partial a/\partial x)|_{x=x_*}\neq 0$.

Fixing hereafter $\gamma=\gamma_*<0$, we assume in addition that 
the limiting matrices $G_\pm$ of as $x\to \pm \infty$ of $G$ have a 
uniform spectral gap, with the ``consistent splitting'' property \cite{AGJ} 
that the dimensions of stable and unstable subspaces of $G_\pm$ are equal; 
moreover, we assume the further condition that consistent splitting hold 
also {\it individually}, on each of the decoupled single-eigenvalue blocks.
Regarding two-eigenvalue blocks, we assume at each Airy point $x_0$ or 
$x=\pm \infty$ the ``block structure condition'' \cite{K,M1,M2,Met3,Met4} 
\be\label{bstruct}
\sgn \, M_{21}= \sgn a'(x_*)b(x_*) \neq 0.
\ee
Similarly as for model problem \eqref{turneq}, this implies that outside a 
small neighborhood of $x_*$ (resp. $\pm \infty$), the block may be further
split into two single-eigenvalue blocks, one growing and one decaying,
for $x$ sufficiently near to $x_*$ (resp. $\pm \infty$).
We assume the augmented consistent splitting condition that these have the same
signs just to the left (resp. right) of $x_*$ as at $-\infty$ (resp. $+\infty$).

\begin{theorem}[Smooth case]\label{basicthm}
Under the assumptions just above, there exists a smooth family of
Kreiss symmetrizers for \eqref{varKsemi} with uniform constant $\tilde \gamma>0$
for $0<\eps\leq \eps_0$ sufficiently small.
\end{theorem}

\begin{proof}
We first dispose of the single eigenvalue blocks. By assumption, these correspond
to distinct eigenvalues 
$$
	\mu_p=(i/\eps)\pm \sqrt{ ab}
$$
of the principal part $(i/\eps)\bp 0 & b\\a & 0\ep$, with $ab$ uniformly bounded
from zero, hence are either pure real, or pure imaginary, throughout the domain
$\R$. In the first case, there is a uniform spectral gap and a symmetrizer
trivially exists. In the second the eigenvalue $\mu$ of the full system including
order one parts has real part $O(1)$, with consistent exponential growth (resp. decay) assumed at $\pm \infty$, whence the mode possesses a trivial exponential
dichotomy by direct exponentiation, and a symmetrizer again exists,
by Proposition \ref{lyapprop}.

We now consider a single $2\times 2$ block \eqref{2block} with a finite Airy point
$x_*$ satisfying \eqref{bstruct}, without loss of generality 
$b,a>0$ and $M_{21}(x_*)>0$, and $x_*=0$.
This is quite similar to the model case \eqref{turneq} treated
above, the two main differences being (i) the presence of new $O(1)$ error terms
$ \tilde M=\bp M_{11}& M_{12}\\0 & M_{22}\ep, $ and (ii) the fact that
$\sgn \, M_{21}$ is specified only at the turning point $x_*$.

Let $\delta>0$ be such that $\alpha:=\max_{[-\delta,+\delta]}|a|$
is sufficiently small and $\beta:= \min_{[-\delta,+\delta]} b$ and
$\beta:= \min_{[-\delta,+\delta]} M_{21}$ are bounded away from zero.  
Define
\be\label{Salpha}
 S_\alpha=\bp 0 & 1+ i\eps \tilde \gamma/2\alpha \\ 
1- i\eps \tilde \gamma/2\alpha&0 \ep.
\ee
similarly as in \eqref{S+bd}.
Then, on the ``Airy region'' $[-\delta, +\delta]$, where $a<\alpha$ 
	\ba\label{Salphaest}
\Re (S_\alpha G)
&=b \bp \tilde \gamma(1- a/2\alpha & 0\\0 & \tilde \gamma/2\alpha \ep
+ \Re S_\alpha \tilde M\\
&\geq b \bp \tilde \gamma/2 & 0\\0 & \tilde \gamma/2\alpha \ep
		+ \bp 0 & O(1)\\O(1)& O(1)\ep\\
&\geq b \bp \tilde \gamma/4 & 0\\0 & \tilde \gamma/4\alpha \ep
\geq \beta \tilde \gamma/4 
\ea
for $\alpha$ sufficiently small, while $\partial_x S_\alpha=O(\eps)$,
hence $S_\alpha$ is a uniform symmetrizer on $[-\delta, +\delta]$.

Let us now treat the ``exponential region'' $(-\infty,-\delta/2]$,
	partly overlapping with $[-\delta, \delta]$,
where $a$ is uniformly negative. On this region, the principal part of
$G$ has a uniform spectral gap of order $1/\eps$, hence possesses a trivial
``frozen-coefficient'', or ``WKB'' symmetrizer $S_-$, with $S, \partial_x S =O(1)$ 
and $\Re(S_- G)\geq \theta/\eps>0$.
Moreover, this symmetrizer is exactly $bS_{WKB}$, where $S_{WKB}$
is as computed in \eqref{Rsymm}, whence, like $\Re S_\alpha \tilde M$
and $S_\alpha$, both $\Re S_-G$ and $\partial_x S_-$ vanish on their
1-1 entries.

Setting $\chi(x)$ to be a smooth transition function, $\chi(-\delta)=0$
and $\chi(-\delta/2)=1$, define the interpolating symmetrizer
\be\label{partS}
	S_{part}(x,\dots):= \chi(x) S_-(x,\dots)+ (1-\chi(x))S_\alpha(x,\dots)
\ee
by partition of unity.
Then,
\ba\label{partrel}
\Re( S_{part}G) + \partial_x S_{part}&=
\chi \Big( \Re( S_{-}G)+ \partial_x S_-\Big) + (1-\chi)\Big( \Re( S_{\alpha}G) +  
\partial_x S_{\alpha}\Big)
+ \chi'\Big( S_\alpha- S_- \Big)\\
&\geq 
b \bp \tilde \gamma/2 & 0\\0 & \tilde \gamma/2\alpha \ep
		+ \bp 0 & O(1)\\O(1)& O(1)\ep
\geq \beta \tilde \gamma/4. 
\ea
Thus, $S_{part}$ is a smooth uniform symmetrizer on $(-\infty,+\delta]$.

There remains only to extend $S_{part}$ to the ``oscillatory regime'' 
$[+\delta,+\infty)$ where $a>0$ and the eigenvalues of the principal
part $G_{p}$ are pure imaginary.
On this side, we use the consistent splitting hypothesis to see that
there is an exponential dichotomy on $[+\delta, +\infty)$ associated with
the WKB splitting, whence there exists a symmetrizer $S_+$ with uniform 
constant $\tilde \gamma_2$ determined by the rates of growth/decay at $+\infty$.
Moreover, the construction of Proposition \ref{lyapprop} allows the freedom
to initiate $S_+$ at point $x=+\delta$, or any other chosen point,
with the ``frozen-coefficient'' symmetrizer $S_{WKB}$ of \eqref{Rsymm}, 
which, on the ``oscillatory boundary'' $x=+\delta$ is within 
$O(\tilde \gamma\eps)^2$ of $S_{a(\delta)}$, where $a(\delta)$
for $\delta$ sufficiently small is in turn (by Taylor expansion)
within $\delta^2=O(\alpha^2)$ of $\alpha$.
It follows that we may alter $S_\alpha$ slightly to exactly match $S_+$
at $x=\delta$ while introducing only absorbably errors
to estimates of the previous cases. Concatenating $S_{part}$ with $S_+$
we thus obtain a continuous uniform symmetrizer on all of $\R$,
by Remark \ref{pwrmk}.

The case of a $2\times 2$ block with Airy point at $\pm \infty$,
without loss of generality $+\infty$,
may be treated similarly as, but more simply than, the case of a finite point,
with the treatment of the ``Airy region'' $[1/\delta, +\infty)$ reducing to
essentially the constant-coefficient case of Kreiss \cite{K}.
\end{proof}

\brs\label{krmks}
1. The reduction to traceless form \eqref{conc} may be regarded as a variant
of \cite[Lemma 4.3]{K} in the special case of a Jordan block of dimension
$s=2$, designed to highlight the connection with Airy's equation and the
analyses of \cite{Er4,LWZ,YZ2}.
As can be seen by the details of the argument, form \eqref{turneq} is not 
needed on the whole line, but only in a vicinity of the turning point $x_*$,
allowing treatment of more general cases with further care.

2. The crucial construction of the family of symmetrizers $S_\alpha$ likewise
reflects the key observation of \cite[Lemma 4.4]{K} for Jordan blocks of
arbitrary dimension $s\geq 2$ that there is a degree of freedom in the choice
of symmetrizers allowing arbitrarily large constants in coordinates $2, \dots, s$,
allowing absorbtion of remaining error terms.

3.  The block structure condition for Airy point at infinity, though phrased here
as an additional assumption, is in the hyperbolic constant multiplicity case
a consequence of high-frequency dissipativity
of the end states, which is implied by our standard assumption of 
high-frequency resolvent bound \eqref{hfres}.  
See \cite[Lemma 2.7, Eqn. (2.21)]{K} in the stictly hyperbolic case, \cite{Met4} 
in the general constant-multiplicity case.

4. Similarly, augmented consistent splitting at finite Airy points may be shown to
follow from high-frequency resolvent assumption \eqref{hfres}, via 
the high-frequency spectral stability it implies.
We believe, but have not shown, that, similarly, 
the block structure assumption for finite Airy points is forced by
\eqref{hfres}. For the analytic-coefficient case, it is a consequence
of the spectral analysis of \cite{LWZ}, via exact conjugation to the Airy equation.
Analytic approximation of coefficients together with continuity of resolvent
bounds seems a promising direction for a proof in the general case, bypassing
spectral stability to appeal directly to \eqref{hfres}.

5. We note that spectral properties of the Airy operator and perturbations
are encoded in the construction of Kreiss symmetrizers, without the need
for exact conjugation.
In particular, the case of Airy points at infinity admits considerable simplification
by this approach, cf. \cite{LWZ}.

6. The above hypotheses (some redundant!) are verified by the direct WKB expansion
of \cite{YZ2} in the case of smooth 2-D hydraulic shock profiles
for the inclined Saint Venant equations.
\ers

\subsection{Discontinuous profiles: the two cases of Erpenbeck}\label{s:gendisc}
We now turn to the discontinuous case, assuming the same structural assumptions
as in the smooth case-- i.e., block-diagonalizability of 
\eqref{varKsemi} for critical frequencies $\lambda=i/\eps + \gamma$ 
into scalar and $2\times 2$ blocks, each containing at most one Airy-type turning
point, without loss of generality at $x_1=0$--
but now allowing also a single shock discontinuity at $x_1=\tilde x$,
without loss of generality lying in $[-\infty,0]$.
The case that there is a shock but no turning point is straightforward
by the methods of Section \ref{s:discdich}.
We therefore focus on the case that {\it both shock and turning point occur}.

As observed by Erpenbeck in the detonation setting \cite{Er4}, there
are two main cases in the simultaneous presence of turning point
and discontinuity: what we will call the {\it oscillatory intermediate region}
and {\it exponential intermediate region} cases, depending whether the
eigenvalues of \eqref{redprince} are {\it pure imaginary} or {\it real}
on the interval $(0,\tilde x)$ between turning point and shock.
In the exponential case, the associated Kreiss-Lopatinsky condition corresponding
to spectral stability reduces in the high-frequency limit just to entropy
admissibility of the shock, hence gives no additional information other than
dissipativity of the endstates $w_\pm$ \cite{LWZ,Z9}.
In the oscillatory case, dynamics on $[0,\tilde x]$ play a role in spectral
stability as well, similarly as in the analysis of Section \ref{s:onetwo}
There are also the boundary cases when turning point and shock coincide, or
the turning point lies at $-\infty$.

We discuss each of these cases in turn, describing briefly
how the symmetrizer constructions of
the previous subsection may be modified to accomodate the shock.

\subsubsection{Oscillatory intermediate region}\label{s:E1}
The most delicate case is the oscillatory one, for which the
high-frequency Kreiss-Lopatinski encodes information that must be
incorporated in the symmetrizer construction.
On $(-\infty, \delta)$, $0<\delta \ll 1$, we may use the symmetrizer construction
of the smooth case, after the coordinate transformation $x\to -x$.
Recall that in diagonalized coordinates, this amounts to the choice
$S=\diag\{-1,1\}$ at $x=\delta$.
Choosing a symmetrizer on $[\delta, \tilde x]$ matching this value at $x=\delta$
then amounts to solving the bounded interval case treated in Section \ref{s:onetwo},
with artifical left boundary condition $u^+(0)=u^-(0)$. 

But, likewise, the WKB study of \cite{LWZ,YZ2} 
(encoding Airy dynamics as described in \cite{LWZ,Z9}, taking the decaying
solution at $-\infty$ to the sum of growing and decaying solutions at $0^+$)
gives a Kreiss-Lopatinsky condition corresponding to the condition \eqref{KL}
for the bounded interval. Thus, again, the high-frequency resolvent condition
implies \eqref{KL}, and thereby existence of a smooth symmetrizer.
%On $[\tilde x,+\infty)$,
To the right of the subshock, 
there is no turning
point, and so uniform exponential dichotomies hold. Thus, we may treat
this part
by a straighforward weighting as in Section \ref{s:discdich}.

\subsubsection{Exponential intermediate region}\label{s:E2}
In the exponential case, one may construct a symmetrizer on $(-\infty, \delta]$
as in the smooth case, then join to essentially arbitrarily weighted
frozen-coefficient symmetrizers constructed by WKB, by the same interpolation
argument of the previous subsection, weighting the decaying mode sufficiently 
strongly, as in the proof of Proposition \ref{lyappropdisc}.
The symmetrizer on $[\tilde x-\infty, +\infty)$ may then be
chosen by a straighforward weighting as in Section \ref{s:discdich}.
%exactly as in the previous case.

\subsubsection{Turning point at infinity}\label{s:Ebdry}
The case of a turning point near $-\infty$, with shock location taken
without loss of generality at $x=0$, reduces essentially to the bounded
domain computation of the previous subcases.
For, as noted in the treatment of smooth profiles, the same constant symmetrizer
used for finite turning points applies also near infinity, up to the value
$x=-M$, $M\gg 1$ such that $a(-M)=a_0$, $a_0>0$ small and fixed.
Using the same matching arguments as previously, we reduce again to the 
bounded domain problem of Section \ref{s:onetwo} on $[-M,0]$
with artifical left boundary condition $u^+(0)=u^-(0)$,
and the argument goes as before.

\subsubsection{Coinciding subshock and turning point}\label{s:Ebdry}
In the case that the turning point coincide (or nearly coincide), 
one must modify the choice of symmetrizer to emphasize the weight of
the decaying mode, in order to achieve boundary dissipativity, \eqref{clS0}.
The same issue arises in the constant-coefficient case treated by Kreiss, 
and can be handled following the approach of [Lemmas 4.1-4.4]{K}.

Namely, substituting for \eqref{Salpha} the modified symmetrizer
\be\label{Salphad}
 S^d_\alpha=\bp 0 & 1+ i \eps \tilde \gamma/2\alpha \\ 
1- i \eps \tilde \gamma/2\alpha& d \ep.
\ee
for $d>0$ sufficiently large and $\alpha>0$ sufficiently small,
we find that $S_\alpha$ satisfies both interior and boundary dissipativity
on a small neighborhood $[\tilde x-\delta,\tilde x]$ of the turning point and shock.
Note that this is exactly analogous to the choice in \cite{K} of
symmetric principal part $D$ and corrector $iF$, $F$ skew, given by
\be\label{D}
D=\bp 0 & 1\\ 1 & d\ep, \qquad F=\bp 0 & C\\-C& 0\ep,
\ee
with $d$, $C$ sufficiently large, in the simplest setting of an
$s\times s$ Jordan block with $s=2$.

It remains to extend this symmetrizer to $(-\infty, -\delta]$.
Using the interpolation argument of the smooth exponential case,
we see that we need only construct a symmetrizer on $(-\infty,\tilde x -\delta]$
matching $S_\alpha^d$ to order $o(1)$ on $(\tilde x-2\delta, \tilde x)$.
But this is readily achieved by choosing an appropriate member of
the one-parameter family of WKB symmetrizers 
\be\label{SWKBtheta}
	S_{WKB}:=L^* \bp \theta &0\\0&-1\ep L
\ee
generalizing \eqref{SWKB}.
For, as noted in \cite[Lemma 4.1]{K}, there is a unique one-parameter
family of symmetrizers $D$ of the principal Jordan block part
$$
\bp 0 & 1\\ 0 & 0\ep
$$
of $G$ at the turning point, whence the principal parts of 
the families described by \eqref{D} and \eqref{SWKBtheta}
must agree.
Extension of the symmetrizer to $[\tilde x,+\infty)$ is
straightforward, as in previous cases.

%NOTE: direction of stable manifold does not change here, it is cont. across
%the turning point, staying in direction of genuine e-vector (as all are..)
%so, no surprise that special attention not necessary (other than d weighting,
%compensated by K factor on the F term 1/a. cite Kreiss things here.
%Q: what to do on other side? Lyapunov weighting it seems...

\subsubsection{Basic discontinuous damping theorem}\label{s:Ethm}
Collecting the above arguments, we obtain the following basic result,
sufficient to treat multi-D stability in 
(some) physically interesting situations.

\begin{theorem}[Discontinuous case]\label{basicthmdisc}
For profiles with a single discontinuity,
	under the structural assumptions of Theorem \ref{basicthm} (in particular,
	at most a single Airy-type turning point in each separate block), 
together with linear high-frequency resolvent bound \eqref{hfres} for the 
associated linearized problem, 
there exists a smooth family of Kreiss symmetrizers for \eqref{varKsemi} with 
uniform constant $\tilde \gamma>0$
for $0<\eps\leq \eps_0$ sufficiently small.
\end{theorem}

\br\label{svrmk}
As noted in \cite{FRYZ,YZ2}, the hypotheses of Theorem \ref{basicthmdisc}
are satisfied in the oscillatory indermediate region case 
for all discontinuous monotone decreasing profiles of the 
Saint Venant equations for inclined shallow-water flow, 
which occur precisely in the hydrodynamically stable case of Froude number $F<2$.
Nonmonotone discontinuous profiles, occurring for $F>2$, 
fall in the exponential intermediate region case, but do not satisfy
the dissipitivity condition and endstates, since this is equivalent
to hydrodynamic stability.
It would be very interesting to establish 
convective nonlinear stability as done in the 1-D case in \cite{FRYZ},
i.e., stability in an appropriately exponentially weighted norm,
as observed numerically in \cite{YZ2}.
\er

\br\label{nonKreiss}
Our treatment of the oscillatory case is not a simple
analog of the constant-coefficient Kreiss Theory \cite{K}, 
but a blend of that theory with the rather different set of ideas described in 
Section \ref{s:onetwo}, originating in the 1-D investigations of Rodrigues, 
Duch\^ene, Gareaux, Faye, and the author, as described in \cite{RZ2},
with motivations also from the spectral investigations of \cite{Er4,LWZ,Z9,YZ2}.
\er

%%%%%%%%%%%%%%%%%%%%%%%%%%%%%%%%%%%%%%%
%(TODO: in below, add a discont. case section...) DONE
\section{Nonlinear iteration}\label{s:nests}
We now describe how the a priori estimates \eqref{gpars}-\eqref{gparcor} for finitely-supported 
functions in time may be converted to a general estimate like \eqref{idamp} sufficient to close a nonlinear iteration.
As in previous sections, we confine our discussion for simplicity to the case of smooth solutions.

\subsection{Short-time theory}\label{s:short_time}
For smooth waves, we may paralinearize in $x_1,\dots, x_d$ and not $t$, accepting $O(1)$ commutator errors
on the order of the Lipschitz norm of $v$, to obtain
\be\label{shortloc}
\partial_t v-H(\nu,v)=f, \qquad H:= \sum_{j=1}^d A_j(\bar w +  v)i\nu_j -E(\bar w).
\ee

Assuming \eqref{A2}-\eqref{A3}, we can as for the small-amplitude case make a uniformly invertible 
coordinate change $R(\eta,v)$ diagonalizing $\sum_{j=1}^d A_j(\bar w +  v)i\nu_j $, and the entries
of the resulting diagonal matrix are real.
Taking the inner product of $(1+|\nu|^{2s})v$ against equation \eqref{shortloc}
then gives readily (in transformed coordinates)
\be\label{shortest}
\partial_t \|v\|_{H^s}^2 \leq C \big(\|v\|_{H^s}^2+\|f\|_{H^s}^2),
\ee
hence, by Gronwall inequality,
\be\label{intshortest}
\|v(t)\|_{H^s}^2 \leq e^{Ct}\|v(0)\|_{H^s}^2+  C\int_0^t e^{C(t-\tau)} \|f(\tau)\|_{H^s}^2 \, d\tau
\ee
for all $t\geq 0$, so long as the Lipshitz norm of $v$ remains bounded,
a result holding also in the original coordinates.

By a standard fixed-point argument, we then obtain for the original system \eqref{blaw} the following
short-time existence/continuation result.

\begin{proposition}\label{pshort}
	Assuming \eqref{A2}-\eqref{A3},
	for initial data $w_0=\bar w+v_0$, with $v_0 \in H^{s}$, $s$ sufficiently large,
	there exists for some $T>0$ a unique solution of \eqref{blaw} on $0\leq t\leq T$, 
	such that for $v(x,t):= w(x,t)-\bar w(x_1)$,
	$$
	v \in L^\infty([0,T]; H^s)\cap C^0 ([0,T]; H^{s-1/2}),
	$$
	%TODO, CHANGED: only semicontinuous, don't get lower semicont from this...
	%but don't need, either, for bootstrap/cont induction...
	with $\|v(t)\|_{H^s}$ upper semicontinuous.
	Moreover, the solution continues so long as the Lipshitz norm of $v$ remains bounded and
	the $L^\infty$ norm of $v$ remains sufficiently small.
\end{proposition}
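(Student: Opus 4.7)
With the a priori estimate \eqref{intshortest} in hand, the proposition reduces to a standard Picard iteration for hyperbolic systems. The plan is to define $v^{(n+1)}$ as the solution of the linear symmetric-hyperbolic problem obtained by freezing coefficients at the previous iterate,
\[
\partial_t v^{(n+1)} + \sum_{j=1}^d A_j(\bar w + v^{(n)}) \partial_{x_j} v^{(n+1)} + E(\bar w) v^{(n+1)} = F\bigl(v^{(n)}; \bar w, \partial_{x_1}\bar w\bigr),
\]
starting from $v^{(0)}\equiv 0$, where $F$ collects the quadratic-in-$v$ and profile-derivative source terms. Running the linear version of the diagonalization argument behind \eqref{shortest}--\eqref{intshortest}, with coefficients frozen at $v^{(n)}$, gives a bound on $\|v^{(n+1)}(t)\|_{H^s}$ in terms of $\|v_0\|_{H^s}$ and $\int_0^t (\|v^{(n)}\|_{H^s}^2+1)\,d\tau$, with constants depending only on the $L^\infty$ and Lipschitz norms of $v^{(n)}$. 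A standard bootstrap then produces $T=T(\|v_0\|_{H^s},\bar w)>0$ on which $\|v^{(n)}\|_{L^\infty([0,T];H^s)}$ is uniformly bounded.

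Next I would show that $\{v^{(n)}\}$ is Cauchy in $C^0([0,T]; L^2)$ by applying the same energy argument to $v^{(n+1)}-v^{(n)}$; the coefficient jump $A_j(\bar w+v^{(n)})-A_j(\bar w+v^{(n-1)})$ contributes a factor $\|v^{(n)}-v^{(n-1)}\|_{L^\infty}$, which Sobolev embedding controls by the $H^s$ norm once $s$ is large enough. Interpolating with the uniform $H^s$ bound yields Cauchy convergence in every $H^{s'}$ with $s'<s$, in particular in $H^{s-1/2}$. The limit $v$ then lies in $L^\infty([0,T]; H^s)\cap C^0([0,T]; H^{s-1/2})$, with $C^0$ regularity in $H^{s-1/2}$ inherited either from the Cauchy property or directly from the equation, using $\partial_t v\in L^\infty([0,T]; H^{s-1})$.

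Uniqueness follows by applying the $L^2$ difference-energy estimate to two solutions with common data and invoking Gronwall. To upgrade weak-$*$ continuity in $H^s$ to the stated continuity of $t\mapsto \|v(t)\|_{H^s}$, I would use a Bona--Smith argument: regularize the initial datum, obtain smoother approximating solutions that are uniformly bounded in $H^s$ and converge to $v$ in a lower norm, and use time-reversibility of \eqref{shortest} together with weak continuity to exclude jumps in the $H^s$ norm. The continuation criterion is then immediate from \eqref{shortest}: the constant there depends on the coefficients $A_j(\bar w+v)$ only through their $W^{1,\infty}$ norm, hence only on the Lipschitz norm of $v$ together with smallness of $\|v\|_{L^\infty}$ (needed to keep $\bar w+v$ in the regime where assumptions \eqref{A2}--\eqref{A3} and the smooth diagonalizer of $\sum_j i\nu_j A_j(\bar w+v)$ are available); so one can iterate starting from any time at which these remain controlled.

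\textbf{Main obstacle.} The one nonroutine step is the Bona--Smith-style upgrade from weak-$*$ to strong continuity in $H^s$, where one must use more than just the a priori estimate; the rest of the argument is a direct consequence of \eqref{intshortest} and standard linear hyperbolic theory for the iterates.
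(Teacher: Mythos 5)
Your proposal is correct and matches the paper's (very terse) treatment: the paper derives the a priori bounds \eqref{shortest}--\eqref{intshortest} by the paradifferential diagonalization of \eqref{shortloc} and then simply invokes ``a standard fixed-point argument,'' which your Picard-iteration/low-norm-Cauchy/Bona--Smith sketch spells out in the usual way, with the continuation criterion read off from the constants in \eqref{shortest} exactly as you indicate. The only small caveat is the phrase ``linear symmetric-hyperbolic problem'' for the frozen-coefficient iterate equation: under only \eqref{A2}--\eqref{A3} the system need not be Friedrichs symmetric, and well-posedness of the iterates comes from the same paradifferential diagonalizer $R(\nu,v^{(n)})$ used to derive \eqref{shortest}, as you in fact note in the next clause.
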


\subsection{Linear bounds}\label{s:newlinbd}
Proposition \ref{pshort} seems interesting as a frequency-dependent version 
of Friedrichs symmetric hyperbolic theory. 
In the linear setting, it yields global existence with bounded exponential
growth, furnishing a starting point from which we may verify existence of
a Laplace transform, in order to obtain the following sharpened bounds.
Consider the linear homogeneous version 
\be \label{linhomv}
(\partial_t -L)v=0
\ee
of \eqref{shortloc}.

\begin{proposition}\label{sharpprop}
Assume along with \eqref{A2}-\eqref{A3} that the linearized operator $L$
have no spectra with positive real part.
	Then, assuming the high-frequency resolvent estimate \eqref{hfres}
(as follows from a linear damping estimate), the solution of \eqref{linhomv} 
satisfies for any $\theta>0$, and some $C=C(\theta>0$,
\be\label{sharplin}
	\int_0^{+\infty}e^{-2\theta s} \|v(t)\|_{H^s}^2 ds
	\leq C \|v(0)\|_{H^s}^2.
\ee
\end{proposition} 

\begin{proof}
	Consider $\tilde v:= \chi(t) v$, where
	$\chi$ is a $C^\infty$ cutoff function vanishing at $t=0$ and $1$
	for $t\geq 1$.
	Note that $\tilde v$ satisfies 
	$
	(\partial_t-L)\tilde v= \chi'(t)\tilde v,
	$
	with $\chi'$ supported on $0\leq t\leq 1$, and $\|v(t)\|_{H^s}\leq
	C\|v(0)\|_{H^s}$ for $0\leq t\leq 1$.
	By Parseval's inequality, we thus obtain
\eqref{sharplin} for all $\theta$ for which
	the inverse Laplace transform formula holds in $L^2$ sense in time,
	in particular for $\Re \lambda \geq C$, with $C>0$ sufficiently large,
	as in \eqref{intshortest}.
	Using the resolvent identity as in standard semigroup theory,
	together with the assumed uniform high-frequency estimate \eqref{hfres},
	and the assumed boundedness of the resolvent on compact frequency sets
	with $\Re \lambda>0$,
	we may move the contour to $\Re \lambda=\theta$ for any $\theta>0$,
	so long as $\|Lv(0)\|\leq C$, yielding
	the result for $v(0)\in D(L)$. The result for general $v(0)$
	then follows by a density argument, using boundedness/continuity with
	respect to initial data of the linearized solution operator. 
	For smooth solutions, one may alternatively apply Pr\"uss theorem
	\cite{Pr} to obtain the same result.
\end{proof}

Equally important for our purposes is the underlying estimate \eqref{intshortest},
and the resulting bounds for solutions defined on $[0,T]$ 
and any finite $\tau>0$, of
\be\label{mainone}
\int_0^\tau \|v(t)\|_{H^s}^2 dt \leq C \|v(0)\|_{H^s}^2 + C \int_0^\tau \|f(t)\|_{H^s}^2 dt
\ee
and
\be\label{maintwo}
\int_{T-\tau}^T \|v(t)\|_{H^s}^2 dt \leq 
 C \int_{T-2\tau}^{T-\tau} \|v(t)\|_{H^s}^2 dt
+ C \int_{T-2\tau}^T \|f(t)\|_{H^s}^2 dt .
\ee

\subsection{Truncation to finite time} \label{s:truncated}
We are now ready to establish our main, finite-time result.

\begin{theorem}\label{smooththm}
Assuming \eqref{A2}-\eqref{A3}, for a solution $w=\bar w +v$ of \eqref{pres} 
	on $[0,T]$, for which (i) there exist symmetrizers yielding
\eqref{pdamp}, (ii) the linearized operator $L$ about $\bar w$ has no spectra 
	with positive real part, and (iii) $v$ is sufficiently small in $H^s([0,T]\times \R^d)$, 
there holds for some $\eta>0$ sufficiently small the nonlinear damping estimate \eqref{idamp}.
\end{theorem}

 \begin{proof}
	 First, notice that we may easily reduce to the case that $f$ is
	 supported on $ t\leq T$, by multiplying $f$ by a $C^\infty$ cutoff
	 $\chi(t)$ equal to $1$ for $t=T-\tau$ and $0$ for $t=T$.
	 Denoting the resulting modified solution as $\tilde v$ and
applying \eqref{gparcor}, we obtain
	 $$
\int_0^T e^{2 \gamma (T-t}\|\tilde v(t)\|_{H^s}^2 dt 
\leq C_2 \int_0^T e^{2 \gamma (T-t)}\big(\|f(t)\|_{H^s}^2+ \|\tilde v(t)\|_{L^2} \big) dt,
$$
and thus
\be\label{tkey}
\int_\tau ^{T-\tau} e^{2 \gamma (T-t}\|v(t)\|_{H^s}^2 dt 
\leq C_2 \int_0^T e^{2 \gamma (T-t)}\big(\|f(t)\|_{H^s}^2+ \|\tilde v(t)\|_{L^2} \big) dt.
\ee
Adding to this equation the equation \eqref{maintwo}, bounding the term
$ C \int_{T-2\tau}^{T-\tau} \|v(t)\|_{H^s}^2 dt$ on the righthand side 
of \eqref{maintwo} using again \eqref{tkey}, and rearranging, we obtain the result,
{assuming that it holds for $C^\infty$ $f$ supported on $ t\leq T$.}

Next, consider $C^\infty$ $f$ supported on $0\leq t\leq T$, and $v$ vanishing
at $t=0$, and introduce the modified function $v^m$ defined as follows.  
For $0\leq t\leq T$, take $v^m=v$. For $t\geq T$, let $v^m$ satisfy the homogeneous
linear equation about profile $\bar w_x$. For $T\leq t\leq T+1$, let
$v^m$ satisfy a homogeneous equation obtained by $C^\infty$ interpolation between
$\bar w$ and $\bar w+v$ in the arguments of the coefficients of \eqref{pres}.
Finally, set 
\be \label{vdag}
	 v^\dagger:= (1+ \chi(t)(e^{-\tilde \theta (t-T)}-1) v^m, 
\ee
where $\chi$ is a $C^\infty$ cutoff function vanishing for $0\leq t\leq T$
and identically one for $t\geq T+1$. Then, we find that $v^\dagger$
	 satisfies \eqref{pres}, with modified coeffients as described above,
	 and with modified forcing $f^\dagger$ defined as $f$ for $0\leq t\leq T$,
	 $ -\tilde \theta v^\dagger$ for $t\geq T+1$, 
	 and bounded for $T\leq t\leq T+1$ by
\be\label{chibds}
\xi'(e^{-\tilde \theta (t-T)}-1) + \tilde \theta \chi(e^{-\tilde \theta (t-T}-1)=
O(\tilde \theta)\|v^m(t)\|_{H^s} = O(\tilde \theta \|v(T)\|_{H^s},
\ee
where the final inequality follows for $ \|v(T)\|_{H^s}$ sufficiently small
by short-time existence theory applied on the time interval $T\leq t\leq T+1$.

By Proposition \ref{nlprop}, there exist symmetrizers yielding 
\eqref{pdamp} for this interpolant as well.
Moreover, by linear bounds \eqref{sharplin}, the Laplace transform of
$ v^\dagger$ is well-defined for $\Re \lambda \geq \gamma=-\eta$ for $\eta >0$
sufficiently small relative to $\tilde \theta$.
Applying now \eqref{gpars}, we thus obtain
\be\label{finaldag}
\int_0^{+\infty} e^{2 \gamma (T-t}\|v^\dagger (t)\|_{H^s}^2 dt 
\leq C_2 \int_0^{+\infty} e^{2 \gamma (T-t)}\big(\|f^\dagger(t)\|_{H^s}^2
+ \|v^\dagger (t)\|_{L^2} \big) dt.
\ee
By \eqref{chibds}, using again \eqref{maintwo}, the contribution of $f^\dagger$ 
from $T\leq t\leq T+1$ may be absorbed in the lefthand side 
for $\tilde \theta$ sufficiently small.
For $t\geq T+1$ on the other hand, 
$
\|f^\dagger\|_{H^s} = \tilde \theta \|v^\dagger\|_{H^s},
$
and so can again be absorbed on the lefthand side of \eqref{finaldag} for 
$\tilde \theta$ sufficiently small.

Finally, by linear estimate \eqref{sharplin},
the contribution of $\|v^\dagger (t)\|^2$ for $t\geq T+1$ may be bounded
by $C\|\dagger v(T+1)\|_{L^2}^2$, while by short-time existence theory
the contribution from $T\leq t\leq T+1$ may be bounded by $C\|v(T)\|_{L^2}^2$,
each of which by \eqref{maintwo} may be bounded by
$C\int_{T-1}^{T} \|v(t)\|_{L^2}^2 dt$, hence absorbed in the $L^2$
contribution for $0\leq t\leq T$ on the righthand side of \eqref{finaldag}.

Removing all of these absorbable terms reduces the righthand side of
\eqref{finaldag} to an integral from $0$ to $T$.  Discarding the terms 
for $t\geq T$ on the lefthand side and multiplying by $e^{2\gamma T}$
thus gives 
$$
\int_0^{T} e^{2 \gamma (T-t}\|v^\dagger (t)\|_{H^s}^2 dt 
\leq C_2 \int_0^{+\infty} e^{2 \gamma (T-t)}\big(\|f^\dagger(t)\|_{H^s}^2
+ \|v^\dagger (t)\|_{L^2} \big) dt,
$$
establishing \eqref{gparcor} for the key case of $C^\infty $ $f$ 
supported on $t\geq 0$ and $v$ vanishing at initial time $t=0$.
From \eqref{gparcor}, we readily obtain \eqref{idamp} from the 
bound 
$$
\|v(T)\|_{H^s}^2\leq C \int_{T-1}^T \|v(t)\|_{H^s}^2 \, dt
$$
similar to \eqref{maintwo}, established by short time bound
$\|v(T)\|_{H^s}\leq C \int_{T-1}^T \|v(t)\|_{H^s}^2\, dt$
together with Jensen's inequality.

The case of nonvanishing initial data or forcing is now straightforward, 
introducing the modified function $v\sharp:= \chi(t)v$ with $\chi$
a smooth cutoff function that is identically zero for $t\leq 0$ and identically
one for $t\geq 1$, and using \eqref{mainone}.
This satisfies \eqref{pres} with $C^\infty$
forcing $f^\sharp:= \chi f + \chi' v$ vanishing for $t\leq 0$ and $v^\sharp$
vanishing at $t=0$, agreeing with $f$ and $v$ for $t\geq 1$. Estimates for
$0\leq t\leq 1$ may be obtained from short-time theory, completing the proof.
\end{proof}

{\bf Conclusion:} Theorem \ref{smooththm} completes our goal for smooth waves, recovering the central
estimate in the classical damping case, sufficient to close a nonlinear iteration and establish stability,
{\it provided there exist symmetrizers yielding a pseudodifferential damping estimate \eqref{pdamp}.}

\br\label{altprmk}
Alternatively, the theorem may be phrased without reference to symmetrizers by
assuming there exist damping estimates \eqref{pdamp} for both the paralinearized
problem and the modified problem obtained as described above
by interpolating coefficient arguments toward the linearized problem over
the time interval $T\leq t\leq T+1$.
\er

\br\label{difrmk}
As for the symmetrizer estimates earlier on, the step of truncation requires
additional care in the damping case compared to classical arguments for short-time
well-posedness, in which latter case error terms need only be shown to be order one
and not small in order to be absorbed.
In particular, we seem to require the additional assumption (ii) that the linearized
operator $L$ about the (exact) traveling wave have no spectra of positive real part.
It is an interesting open question whether this is a technical artifact or indeed 
necessary for the result. Examination of the linear setting suggests that
presence of essential spectra with positive real part may indeed present a problem, 
an issue connected with causality in the inverse Laplace transform formula.
However, in any case condition (ii) is no restriction for the use in nonlinear stability arguments
for which the damping estimates are intended, since absence of positive real part spectra of $L$ 
is a necessary condition for the linearized stability estimates on which such arguments
are based.
\er

\subsection{Absorbing nonlinear sources $f$ and $g$}\label{s:absorb}
In the smooth multi-d case, it is straightforward to absorb nonlinear contributions
already at the level of symmetrizer estimates, yielding a nonlinear damping bound
\be\label{smoothmd}
 \|v(T)\|_{H^s_\alpha}^2\leq Ce^{-\eta T} \|v(0)\|_{H^s_\alpha}^2
 + C\int_0^T e^{-\eta (T-t)} \|v(t)\|_{L^2_\alpha}^2 \, dt
 \ee
corresponding to that in \cite{Z1,Z2}, and sufficient to close the estimates
as done in these references. In the 1-D case, term
$\psi_t \bar w_x$ in $f$ (and, in the discontinous case,
multiples of $\psi_t$ in $g$) leads to the modified bound
\be\label{smooth1d}
 \|v(T)\|_{H^s_\alpha}^2\leq Ce^{-\eta T} \|v(0)\|_{H^s_\alpha}^2
 + C\int_0^T e^{-\eta (T-t)}\Big( \|v(t)\|_{L^2_\alpha}^2+ |\psi_t(t)|^2\Big) \, dt
 \ee
corresponding to the nonlinear damping estimate obtained in \cite{MaZ1,MaZ2,YZ}
by Kawashima-type energy methods, and is again sufficient to close the 
nonlinear argument as done in the smooth case in \cite{MaZ1,MaZ2} and in
the discontinuous case in \cite{YZ}.

In the discontinuous multi-D case, terms 
$\nabla_{x_2,\dots,x_d,t}\psi \bar w_x$ in $f$ lead, similarly, to 
$$
 \|v(T)\|_{H^s_\alpha}^2\leq Ce^{-\eta T} \|v(0)\|_{H^s_\alpha}^2
 + C\int_0^T e^{-\eta (T-t)}\Big( \|v(t)\|_{L^2_\alpha}^2+ 
 |\nabla_{x_2,\dots,x_d,t}\psi (t)|^2\Big) \, dt,
$$
which should be sufficient to close a nonlinear iteration
by arguments like those of \cite{YZ}.
However, this last step is an aspect of the discontinuous multi-D case (only)
that has yet to be carried out.

\section{Applications}\label{s:appl}
Theorem \ref{smooththm}, when combined with the various methods developed in Sections
\ref{s:nlinequiv}--\ref{s:basic}
gives immediately a number of new results in various domains, along with simplification of previous ones.

\subsection{Extensions and simplification}\label{s:Lapplications}
Our results yield immediately a number of simplifications and extensions for existing work.
For example, In the 1-D large-amplitude relaxation case, they recover and somewhat
simplify the damping result of \cite{MaZ2}, removing contortions associated with translating
linear algebraic estimators to Friedrichs symmetrizer form.
Likewise, in the viscous shock case \cite{Z1,Z2}, global multi-D stability 
is known only for hyperbolic-parabolic systems of ``Kawashima type'' (roughly, simultaneously symmetrizable)
at the endstates of the shock. Yet, high-frequency bounds \eqref{hfres} have been shown in the different context
of bounded-time vanishing viscosity already in \cite{GMWZ2,GMWZ3} by 
Kreiss symmetrizer techniques under the much weaker, apparently sharp assumption \cite[(H5)]{GMWZ2,GMWZ3}
that the endstates be stable as constant solutions, a condition on the dispersion relation of the
associated Fourier symbol. As simultaneous symmetrizability was used in \cite{Z1,Z2} only to establish a
nonlinear damping estimate, we thus extend immediately the multi-D long-time stability results of \cite{Z1,Z2}
to the larger class of systems considered in \cite{GMWZ2,GMWZ3}, putting the two theories (long-time vs. small-viscosity) finally on par.

\subsection{The small-amplitude case}\label{s:applications}
Theorem \ref{constequivcor} both simplifies and extends a number of small-amplitude results,
replacing symmetrizability assumptions with the constant-coefficient high frequency damping condition 
\eqref{chf}, which is implied by but more general than the standard dissipativity conditions for
the endstates of the wave, themselves implied by but more general than the symmetry-based Kawashima 
conditions of \cite{KaS,Ka,Ze}.  In particular, this extends the results of \cite{Kw,KwZ} on stability of 
smooth multi-D relaxation profiles from symmetrizable to merely high-frequency dissipative systems, {\it assuming spectral stability}.
See \cite{SrZ} for a 
substantial further extension of these methods to the case of
{second-order hyperbolic relaxation systems} arising in relativistic gas dynamics,
yielding for the first time
multi-D stability of smooth relativistic viscous shock profiles.

In the case considered in \cite{YZ2} of {\it channel flow}, one may verify spectral
stability as well, to obtain a complete result independent of further assumptions.
For \cite{YZ2}, the Fourier frequencies $\xi$ now are selected from a discrete set
$\xi_j=2\pi j/L$, $j\in \ZZ$,
where $L$ is the width of the channel. For $|\xi_j|$ large, one obtains
spectral stability from Theorem \ref{constequivcor}.  For $|\xi_j|$ bounded and
nonzero, one obtains spectral stability by continuity from the constant solution,
which enjoys a spectral gap.
For $\xi_j=0$, the spectral problem reduces to 1-D, hence stability follows
by the 1-D result of \cite{PZ}.
This completes the proof of spectral stability.
Nonlinear stability can then be shown by the essentially 1-D 
argument for flow in a channel developed in \cite{TZ,Z5}, 
using exponential slaving of nonzero $\xi_j$ modes to the $\xi_j=0$ mode,
together with the nonlinear damping estimate afforded by 
Theorem \ref{constequivcor}.  
%HERE
%TODO: open problem is small amp. stab. for planar multi-d relax'n...

\subsection{The large-amplitude multi-D case}\label{s:potential}
As regards our main motivating example, of large-amplitude multi-dimensional shocks in the 
Saint Venant model, the preliminary WKB analysis has been already done in \cite[\S 2.4, p. 32]{YZ2} 
for the linear problem, yielding the high-frequency linearized spectral stability
needed for our analysis, and verifying the assumptions made in Theorem \ref{basicthm}.
We note further, as pointed out in \cite{YZ2}, that Airy points {\it do appear}
for waves of all amplitudes, both smooth and discontinuous type,
so that the analysis of Theorem \ref{basicthm} is indeed needed to obtain nonlinear damping.
Combining these ingredients with the linearized resolvent bounds obtained in \cite{Kw}
for arbitrary-amplitude smooth multi-D profiles of general relaxation systems, 
one obtains the result 
that {\it spectral stability} in the sense of a standard Evans function
condition \cite{Kw,YZ2} {\it implies linearized and nonlinear stability} with standard decay rates 
matching those of the parabolic case \cite{Z1,Z2,Z3}.
Evans stability has been demonstrated for all such waves by 
nonrigorous numerics in \cite{YZ2}. 
It would be very interesting to convert this to rigorous computer-aided proof.

\section{Discussion and open problems}\label{s:disc}
Our analysis resolves, first, a long-standing question of the relation 
between high-frequency stability \eqref{hfres}, damping \eqref{damp}, and nonlinear iteration.
As noted in \cite{Z2}, \eqref{damp} is {not} implied by \eqref{hfres}; hence
\big(\eqref{hfres} $\Leftrightarrow$ \eqref{pdamp} $\Leftrightarrow$ \eqref{gparcor} \big)
$\Leftarrow$ \eqref{damp}, with \eqref{gparcor} sufficient for nonlinear iteration.
Second, when an energy estimate \eqref{damp} does not hold or is not readily available, it allows us to bring to bear 
frequency-dependent pseudodifferential energy estimates and the full Kreiss symmetrizer machinery to establish
\eqref{hfres}, as was \eqref{semigpest} in the classical shock case 
\cite{K,M1,M2,Met3}, resolving the bottleneck described in the introduction: 
most significantly for smooth large-amplitude waves in multi-D.

\subsection{Open problems}\label{s:open}
A further issue is the complete treatment of nonlinear stability
of multi-D discontinuous ``subshock''-containing solutions, generalizing
1-D results of \cite{YZ,FRYZ}.
Here, we have presented without giving full details the main ingredient 
for nonlinear damping of construction of symmetrizers for
critical frequencies $\lambda=i/\eps + \gamma$.
As mentioned at the end of Section \ref{s:absorb}, another important
step, not appearing in the smooth multi-D theory, is the integration
in a full nonlinear stability argument of the $\nabla_{x,t} \psi$
terms appearing in the nonlinear source terms $f$ and $g$.
Moreover, the requisite low-frequency linear bounds,
though accessible to the same inverse Laplace-transform techniques used in 
the smooth case, have up to now not been carried out: in particular
the estimation of $\nabla_{x,t} \psi$ terms.
We expect but have not verified that these can be treated in a similar 
way as in the 1-D analysis of \cite{YZ}.
{\it We regard this}, along with systemization/generalization of the
rather special arguments presented here,
{\it as the next main open problem in the theory}. 

In short, beyond the various applications described in Section \ref{s:appl},
the main contribution of the present work in our view is to take the theory to 
the threshold of a Kreiss symmetrizer-based treatment of nonlinear damping
and full nonlinear stability of large-amplitude {\it discontinuous} profiles 
in multi-D, under the assumptions of noncharacteristic hyperbolicity \eqref{A1} 
and constant multiplity \eqref{A2}.
Where constant multiplicity fails, e.g., some cases in MHD and anisotropic 
wave motion, further analysis is needed, even in the constant-coefficient case; 
see related discussion in \cite{MZ5,GMWZ5}.

Likewise, failure of noncharacteristicity is an important direction for 
further study, with applications to periodic roll wave solutions \cite{JNRYZ,YZ2}.
For, as shown in \cite{RZ2}, failure of condition \eqref{A1} does not imply 
failure of damping, nor linear equivalence, but only of the general arguments 
developed above.  Given the important example of Saint Venant roll 
waves \cite{JNRYZ,RZ2}, it would be very interesting
to investigate this case further using other techniques.

A further interesting problem, as described in Remark \ref{finitermk}
is the treatment of three or more subshocks or systems of size $n>3$
in the 1-D setting, and associated linear algebraic issues.
We note that the classical symmetrizers constructed for roll waves in 
\cite{RZ2} are ``sharp'', in the sense that they exist whenever there
holds the high-frequency resolvent bound \eqref{hfres}, for
systems of size $n\leq 5$, but apparently not in general.
It would be very interesting, therefore, to reconsider this problem
from the symmetrizer point of view, a related linear algebraic question,
with the hope of obtaining a sharp result also for systems of size $n>5$.

The extension to waves with multiple Airy-type turning points
is another important problem for applications,
as is the treatment of degenerate turning points of parabolic cylinder-
where $x$ is replaced in \eqref{turneq} by $x^2$ in the lower lefthand entry of $G$- or higher degeneracy type, or of Jordan blocks with $s>2$,
the latter presumably be treated by arguments closer to the original 
framing of \cite{K} (see Remarks \ref{krmks}).
Neither case occurs for the motivating example of shallow water flow \cite{YZ2}.

Finally, even for smooth multi-D relaxation profiles 
(on the whole space as opposed to a channel as discussed above),
there remains the important open question of 
verifying multi-D spectral stability analytically in the small-amplitude limit,
without resort to numerical approximation.
As numerical Evans function computations become quite stiff
in the small-amplitude limit, this is of practical as well as theoretical
concern.
See \cite{PZ,FS02,FS10,Ba} for models of such analyses in related situations.

 %%%%%%%%%%%%%%%%%%%%%%%%%%%%%%%%%%%%%%%%%%%

 %%%%%%%%%%%%%%%%%%%%%%%%%%%


\begin{thebibliography}{GMWZ7}

\bibitem{AGJ} J. Alexander, R.A. Gardner, and C.K.R.T. Jones,
{\it A topological invariant arising in the stability analysis of travelling waves},
 J. Reine Angew. Math. 410 (1990) 167--212.

\bibitem{Ba} J. B\"arlin,
{\it Spectral Stability of Shock Profiles for Hyperbolically Regularized Systems 
of Conservation Laws,}  Arch Rational Mech Anal 248, 125 (2024).

\bibitem{BMZ} B. Barker, R. Monteiro, and K. Zumbrun, {\it Transverse bifurcation of viscous slow MHD shocks,}
Phys. D 420 (2021), Paper No. 132857, 33 pp.

\bibitem{BS} S Benzoni-Gavage and D. Serre, 
{\it Multidimensional hyperbolic partial differential equations: first-order systems and applications,} 
Oxford Mathematical Monographs, Oxford University Press, Oxford, 2006.
%BVP, ref. for paradiff from Matthias. Appendix C is where proved.

\bibitem{B} J.-M. Bony, {\it Calcul symbolique et propagation des singularit\`es pour les 
	\`equations aux d\`erive\`es partielles non lin\`eaires,}
	Ann. Scient. Ecole. Norm. Sup. (4) 14 (2) (1981) 209--246.

%\bibitem{Ba} G.~K. Batchelor, {\it An introduction to fluid dynamics},
%Cambridge Mathematical Library. Cambridge University Press, Cambridge, paperback edition, 1999.

\bibitem{Bre} A. Bressan,
{\it Hyperbolic systems of conservation laws. The one-dimensional Cauchy problem,}
Oxford Lecture Series in Mathematics and its Applications, 20. Oxford University Press, Oxford, 2000. xii+250 pp. ISBN: 0-19-850700-3.

\bibitem{Da} C.M. Dafermos,
{\it Hyperbolic conservation laws in continuum physics,}
Springer Verlag 325, Berlin, 2000.

\bibitem{DR22} V. Duch\^ene and L. M. Rodrigues, 
{\it  Stability and instability in scalar balance laws: fronts and periodic waves,}
Anal. PDE, 15(7):1807--1859, 2022.

\bibitem {Er4}  Erpenbeck, J. J.,
{\it Detonation stability for disturbances of small transverse wave length},
Phys. Fluids 9 (1966) 1293--1306;  \emph{Stability of detonations for disturbances of small transverse wavelength}, Los Alamos Preprint, LA-3306, (1965), 140 pages.

\bibitem{FR1} G. Faye and L. M. Rodrigues, {\it Exponential Asymptotic Stability of Riemann Shocks of Hyperbolic
	Systems of Balance Laws,} SIAM J. Math. Anal. 55 (2023), no. 6, 6425--6456.

\bibitem{FR2} G. Faye and L. M. Rodrigues, {\it Convective stability and exponential asymptotic stability of non constant shocks of hyperbolic systems of balance laws,} work in progress.

\bibitem {FRYZ} G. Faye, L.-M. Rodrigues, Z. Yang, and K.Zumbrun,
{\it Existence and stability of nonmonotone hydraulic shocks for the Saint Venant equations of inclined thin-film flow,} 
Arch. Ration. Mech. Anal. 248 (2024), no. 5, Paper No. 82, 49 pp.

\bibitem{FS21} H. Freist\"uhler and M. Sroczinski, 
{\it A class of uniformly dissipative symmetric hyperbolic-hyperbolic systems,} 
J. Differ. Equ. 288 (2021) 40--61.

\bibitem{FS02} H. Freist\"uhler and P. Szmolyan,
{\it Spectral Stability of Small Shock Waves,}
Arch. Rational Mech. Anal. 164, 287--309 (2002). 

\bibitem{FS10} H. Freist\"uhler and P. Szmolyan,
{\it Spectral Stability of Small-Amplitude Viscous Shock Waves in Several 
Space Dimensions,} Arch Rational Mech Anal 195, 353--373 (2010).

\bibitem{GR25} L. Gar\'enaux and L. M. Rodrigues,
{\it Convective stability in scalar balance laws,} 
Differential Integral Equations, 38(1-2):71--110, 2025.

\bibitem{GZ} R.A. Gardner and K. Zumbrun,
{\it The gap lemma and geometric criteria for instability of viscous shock
profiles,} Comm. Pure Appl. Math. 51 (1998), no. 7, 797--855.

\bibitem{GMWZ1}
Gues, O., Metivier, G., Williams, M., and Zumbrun, K., {\it
Existence and stability of multidimensional shock fronts in the
vanishing viscosity limit}, Arch. Ration. Mech. Anal.  175  (2005),
no. 2, 151--244.


\bibitem{GMWZ2}
Gues, O., Metivier, G., Williams, M., and Zumbrun, K., \emph{Paper
4}, {\it Navier-Stokes regularization of multidimensional Euler
shocks}, Ann. Sci. \'Ecole Norm. Sup. (4)  39  (2006),  no. 1,
75--175.

\bibitem{GMWZ3} O. Gu\`es, G. M\'etivier, M. Williams, and K. Zumbrun,
{\it Existence and stability of noncharacteristic boundary layers for the
compressible Navier-Stokes and MHD equations}, 
 Arch. Ration. Mech. Anal.  197  (2010),  no. 1, 1--87.

\bibitem{GMWZ5} O. Gu\`es, G. M\'etivier, M. Williams, and K. Zumbrun,
{\it Uniform stability estimates for constant-coefficient symmetric hyperbolic boundary value problems,} 
 Comm. Partial Differential Equations  32  (2007),  no. 4-6, 579--590.

\bibitem{GMWZ6} O. Gu\`es, G. M\'etivier, M. Williams, and K. Zumbrun,
	{\it Viscous boundary value problems for symmetric
systems with variable multiplicities,} J. Diff. Eq. 244 (2008), no. 2, 309--387.

%\bibitem{He} D. Henry, {\it Geometric Theory of Semilinear Parabolic Equations,} 
%Lecture Notes in Math., vol. 840, SpringerVerlag, Berlin, 1981.

%\cite[Appendix A]{HLZ}
%\bibitem{HLZ} J. Humpherys, G. Lyng, and K. Zumbrun, 
	%{\it Multidimensional stability of large-amplitude Navier-Stokes shocks,}
%Arch. Ration. Mech. Anal. 226 (2017), no. 3, 923--973.

\bibitem{JNRYZ} M. Johnson, P. Noble, L.M. Rodrigues, Z. Yang, and K. Zumbrun, 
{\it Spectral stability of inviscid roll waves,} Comm. Math. Phys. 367 (2019), no. 1, 265-316.

\bibitem{Kat} T. Kato, {\it Perturbation theory for linear operators},
Springer--Verlag, Berlin Heidelberg (1985).

\bibitem{Ka} S. Kawashima,
{\it Systems of a hyperbolic--parabolic composite type,
 with applications to the equations of magnetohydrodynamics}.
thesis, Kyoto University (1983).

\bibitem{KaS} S. Kawashima and Y. Shizuta,
\textit{On the normal form of the symmetric hyperbolic-parabolic
 systems associated with the conservation laws}.
Tohoku Math. J. 40 (1988) 449--464.

\bibitem{K} H.-O. Kreiss:
\emph{Initial boundary value problems for hyperbolic systems},
Comm. Pure Appl. Math. 23. 1970, 277-298.

\bibitem {Kw} B. Kwon, {\it Stability of planar shock fronts for multidimensional systems of relaxation equations,}
J. Differential Equations 251 (2011), no. 8, 2226-2261.

\bibitem {KwZ} B. Kwon and K. Zumbrun, 
{\it Asymptotic behavior of multidimensional scalar relaxation shocks,}
J. Hyperbolic Differ. Equ. 6 (2009), no. 4, 663--708.

%TODO: remove? NO, ref'd
%\bibitem{L2} P.D. Lax, 
%{\it Development of singularities of solutions of nonlinear hyperbolic partial differential equations,}
%Journal of Math. Physics, 5 (1964) 61 1-613. 

\bibitem{LWZ} O. Lafitte, M. Williams, and K. Zumbrun,
	{\it High-frequency stability of detonations and turning points at infinity,}
SIAM J. Math anal. Vol. 47 (2015) No. 3, pp. 1800--1878.

\bibitem{LWZ2} O. Lafitte, M. Williams, and K. Zumbrun,
	{\it Block-diagonalization of ODEs in the semiclassical limit
	and $C^\omega$ vs. $C^\infty$ stationary phase,}
SIAM J. Math anal. Vol. 48 (2016) No. 3, pp. 1773--1797.

\bibitem{L} D. Lannes, {\it Sharp estimates for pseudo-differential operators with symbols of limited smoothness and commutators,} J. Functional Anal. 232 (2006) 495--539.

\bibitem{Li} T.-P. Liu, {\it Hyperbolic conservation laws with relaxation,} Comm. Math. Phys. 108 (1987), no. 1, 153--175.

\bibitem{Ly} A.M. Lyapunov, {\it The General Problem of the Stability of Motion} (In Russian), Doctoral dissertation, Univ. Kharkov 1892 English translations: 
(1) Stability of Motion, Academic Press, New-York \& London, 1966 (2) The General Problem of the Stability of Motion, (A. T. Fuller trans.) Taylor \& Francis, London 1992. 

\bibitem{M1} A. Majda,
{\it The stability of multi-dimensional shock fronts -- a new problem for linear hyperbolic equations.}
Mem. Amer. Math. Soc. 275 (1983).
 
\bibitem{M2} A. Majda,
{\it The existence of multi-dimensional shock fronts.}
Mem. Amer. Math. Soc. 281 (1983).
 
\bibitem{MaZ1} C. Mascia and K. Zumbrun,
{\it Pointwise Green's function bounds and stability of relaxation shocks}.
Indiana Univ. Math. J. 51 (2002), no. 4, 773--904.

\bibitem{MaZ2} C. Mascia and K. Zumbrun,
Stability of large-amplitude shock profiles of general relaxation 
systems,  SIAM J. Math. Anal.  37  (2005),  no. 3, 889--913.

\bibitem{MaZ4} C. Mascia and K. Zumbrun, 
{\it Stability of shock profiles of dissipative symmetric
hyperbolic--parabolic systems}, Comm. Pure Appl. Math.  57  (2004),  no. 7, 841--876. 

\bibitem{MaZ5} C. Mascia and K. Zumbrun, 
{\it Pointwise Green function bounds for shock profiles of systems with real viscosity,}
Arch. Ration. Mech. Anal. 169 (2003), no. 3, 177--263.
 
\bibitem{Met3} G. M\'etivier,
\emph{Stability of multidimensional shocks},
Advances in the theory of shock waves,  Progress in Nonlinear PDE, 47, Birkh\"auser, Boston, 2001.

\bibitem{Met4} G. M\'etivier, {\it The block structure condition for symmetric hyperbolic systems,}
	Bull. London Math. Soc. 32 (2000), no. 5, pp. 689--702.

\bibitem{Met5} G. M\'etivier, {\it Para-differential calculus and applications to the Cauchy problem 
for nonlinear systems,} CRM Series, 5 Edizioni della Normale, Pisa, 2008,
xii+140 pp. ISBN: 978-88-7642-329-1; 88-7642-329-1.
%ref for paradiff.

\bibitem{MZ3}
G. M\'etivier and K. Zumbrun, \emph{Large viscous boundary layers
for noncharacteristic nonlinear hyperbolic problems}, Mem. Amer. Math. Soc.  175  (2005),  no. 826, vi+107 pp.

\bibitem{MZ4}
G. M\'etivier and K. Zumbrun, \emph{Large-amplitude modulation of periodic traveling waves,} 
Discrete Contin. Dyn. Syst. Ser. S 15 (2022), no. 9, 2609--2632.

\bibitem{MZ5}
G. M\'etivier and K. Zumbrun, \emph{Hyperbolic boundary value problems for symmetric systems with 
variable multiplicities,} J. Differential Equations 211 (2005), no. 1, 61--134.

\bibitem{O} F.W.J. Olver, {\it Asymptotics and Special Functions,}
Academic Press, New York, 1974.

\bibitem{O2} F.W.J. Olver, {\it Uniform asymptotic expansions of solutions of linear second-order equations for large values of a parameter,} 
Philosophical Transactions of the Royal Society of London, 
Series A, 250 (1958), 479--517.

\bibitem{Pa} A. Pazy, {\it Semigroups of linear operators and applications to partial differential equations,}
	Applied Math Sci., Springer New York (1992), 296 pp.

\bibitem{P} K.J. Palmer, {\it A generalization of Hartman's linearization theorem,}
J. Math. Anal. Appl., 41 (1973), pp. 753--758.

\bibitem{PZ} R. Plaza and K. Zumbrun,
{\it An Evans function approach to spectral stability of small-amplitude shock profiles,}
Discrete Contin. Dyn. Syst. 10 (2004), no. 4, 885--924.

\bibitem{Pr} J. Pr\"uss, {\it On the spectrum of $C_0$-semigroups,} Trans. Am. Math. Soc. 284, 847--857 (1984)

\bibitem{RZ} L.M. Rodrigues and K. Zumbrun,
{\it Periodic-coefficient damping estimates, and stability of large-amplitude roll waves in inclined thin film flow,}
SIAM J. Math. Anal. 48 (2016), no. 1, 268--280.

\bibitem{RZ2} L.M. Rodrigues and K. Zumbrun, {\it Linear damping estimates 
	for inviscid roll waves}, to appear, J. nonlinear waves, 29 pp.

%\bibitem{Sm} J. Smoller, 
%{\it Shock waves and reaction--diffusion equations,} 
%Second edition, Grundlehren der Mathematischen Wissenschaften
%[Fundamental Principles of Mathematical Sciences], 258. Springer-Verlag, 
%New York, 1994. xxiv+632 pp. ISBN: 0-387-94259-9.

\bibitem{Sr} M. Sroczinski,
{\it Global existence and decay of small solutions for
quasi-linear second-order uniformly dissipative hyperbolic-hyperbolic systems,}
J. Diff. Equ. 383 (2024), 130--162.

\bibitem{SrZ} M. Sroczinski and K. Zumbrun,
{\it Non-linear stability of shock profiles in dissipative hyperbolic-hyperbolic systems,}
Preprint; arXiv:2510.09287.

\bibitem{TZ} B. Texier and K. Zumbrun, 
{\it Galloping instability of viscous shock waves,}
Physica D 237 (2008), no. 10-12, 1553--1601.


%\bibitem{Wh} G. B. Whitham, \emph{Linear and Nonlinear Waves}, Pure and Applied Mathematics (New York), John Wiley \& Sons Inc., New York, 1999.
%Reprint of the 1974 original, A Wiley-Interscience Publication.

\bibitem{Wa} W.~Wasow, 
{\it Asymptotic expansions for ordinary differential equations,}
W.~Wasow. New York--London--Sydney, John Wiley and Sons Inc. (1966) XI+362 pp.

\bibitem{YZ}  Z. Yang, and K.Zumbrun, {\it Stability of hydraulic shock profiles,} Arch. Ration. Mech. Anal. 235 (2020), no. 1, 195--285.

\bibitem{YZ2} Z. Yang, and K.Zumbrun, {\it Multi-d stability of hydraulic shock and roll waves in the Saint Venant equations for shallow water flow,} 
J. Math. Fluid Mech. 27 (2025), no. 2, Paper No. 30, 90 pp.

\bibitem{Ze} Y. Zeng,
{\it Gas dynamics in thermal nonequilibrium 
and general hyperbolic systems with relaxation,}
Arch. Ration. Mech. Anal.  150 (1999), no. 3, 225--279.

\bibitem{Z1} K. Zumbrun, 
{\it Stability of large-amplitude shock waves of compressible 
Navier-Stokes equations,}
With an appendix by Helge Kristian Jenssen and Gregory Lyng.  Handbook of mathematical fluid dynamics. Vol. III,  311--533, North-Holland, Amsterdam, (2004).

\bibitem{Z2} K. Zumbrun, 
{\it Planar stability criteria for viscous shock waves of
systems with real viscosity,} in Hyperbolic Systems of Balance Laws, 
CIME School lectures notes, P. Marcati ed., 
Lecture Note in Mathematics 1911, Springer (2004).

\bibitem{Z3} K. Zumbrun,
{\it Multidimensional stability of planar viscous shock waves,}
Advances in the theory of shock waves, 307--516,
Progr. Nonlinear Differential Equations Appl., 47, Birkh\"auser Boston,
Boston, MA, 2001.

\bibitem{Z4} K. Zumbrun,
{\it High-frequency asymptotics and one-dimensional stability of Zel'dovich-von Neumann-D\"ring 
detonations in the small-heat release and high-overdrive limits,}
Arch. Ration. Mech. Anal. 203 (2012), no. 3, 701--717.
%variable-coefficient conjugation lemma \cite[Lemma 4.3]{Z4}

%\bibitem{Z4} K. Zumbrun, 
%{\it Numerical error analysis for Evans function computations:
%a numerical gap lemma, centered-coordinate methods,
%and the unreasonable effectiveness of continuous orthogonalization },
%Preprint.
% (2009).

\bibitem{Z5} K. Zumbrun, 
{\it Stability and dynamics of viscous shock waves. Nonlinear 
conservation laws and applications,} 123--167, 
IMA Vol. Math. Appl., 153, Springer, New York, (2011).


\bibitem{Z9} K. Zumbrun,  
{\it Recent results on stability of planar detonations,}
Shocks, singularities and oscillations in nonlinear optics and fluid mechanics, 
273--308, Springer INdAM Ser., 17, Springer, Cham, 2017. 

%\bibitem{Z10} K. Zumbrun,  
%Invariant manifolds for a class of degenerate evolution equations and structure of kinetic shock layers. Theory, numerics and applications of hyperbolic problems. II, 691-714, Springer Proc. Math. Stat., 237, Springer, Cham, 2018. 
%
%
%\bibitem{ZS} K. Zumbrun and D. Serre, {\it Viscous and inviscid stability of multidimensional planar shock fronts,} 
%Indiana Univ. Math. J. 48 (1999) 937--992.

\end{thebibliography}
\end{document}